\newtheorem{lemma}{Lemma}
\newtheorem{theorem}{Theorem}
\newtheorem{remark}{Remark}
\newtheorem{definition}{Definition}
\newcommand\R{{\mathbb R}}
\newcommand\Z{{\mathbb Z}}
\newcommand\N{{\mathbb N}}
\newcommand\m{{m \times m}}
\begin{document}

\markboth{Bose/Murray}{Polynomial decay of correlations}

\title{Polynomial decay of correlations in the generalized baker's transformation}
\author{
Christopher Bose\footnote{Department of Mathematics and Statistics, University of Victoria,
 PO Box 3060 STN CSC,
Victoria BC Canada V8W 3R4, cbose@uvic.ca}
 and Rua Murray\footnote{
Department of Mathematics and Statistics, University of
Canterbury,
 Private Bag 4800, Christchurch 8140, New Zealand,  rua.murray@canterbury.ac.nz}}

\maketitle

\begin{abstract}
We introduce a family of area preserving generalized baker's transformations acting on the
unit square and having sharp polynomial rates of mixing for H\"older data.  The construction
is geometric, relying on the graph of a single variable ``cut function''.
Each baker's map $B$ is non-uniformly hyperbolic  and
while the exact mixing rate depends on $B$, all polynomial rates can be attained.
The analysis
of mixing rates depends
on building a suitable Young tower for an expanding factor. The
mechanisms leading to a slow rate of correlation decay are especially
transparent in our examples due to the simple geometry in the construction.
For this reason we propose this class of
maps as an excellent testing ground for new techniques for the analysis of decay of
correlations in non-uniformly hyperbolic systems.
Finally, some of our examples can be seen to be extensions of certain 1-D non-uniformly expanding maps
that have appeared in the literature over the last twenty years thereby providing a unified treatment of
these interesting and well-studied examples.
\end{abstract}

Keywords: polynomial decay of correlations -- non-uniformly hyperbolic dynamical system -- baker's map\\

\section{Introduction}\label{intro}

The analysis of non-uniformly hyperbolic systems has undergone an explosion of
activity in the last decade with a range of new techniques becoming available;
notably Young towers~\cite{Y1,Y2}, hyperbolic times~\cite{A,AA,ABV,ALP,ALP2}
and, earlier, Pesin theory for maps with singularities~\cite{KS}. The application
of this machinery on `real life' examples is often highly technical, with substantial effort being
required, for example, to isolate and analyze lower-dimensional expanding factors whose mixing properties
drive statistics for the higher-dimensional hyperbolic map. In this paper
we consider a class of two-dimensional {\em Generalized Baker's Transformations\/} (GBTs)
whose simple geometry allows, via Young towers, the extraction of sharp polynomial rates of
correlation decay on H\"older observables. The constructions and proofs are explicit and geometrically natural.
The relevant one-dimensional expanding factors are certain piecewise $C^2$, Markov,  non-uniformly
expanding maps of the unit interval, the most familiar and well-understood examples we know of
for analysis of the connection between hyperbolicity and mixing rates.

The extension from baker's to generalized baker's is easy to describe.
Specifically,
a two-dimensional map~$B$ on the unit square $S=[0,1]^2$ is determined by a \emph{cut function}
$\phi$ whose graph
$y=\phi(x)$ partitions $S$
into lower and upper pieces.
The cut function is assumed to be measurable and to satisfy $0 \leq \phi(x) \leq 1$; these
are the only constraints in the construction.
The two-dimensional dynamics are
depicted in Figure \ref{fig:gbt} and  
defined by mapping the vertical lines $\{x=0\},\{x=1\}$,
into themselves, and sending
vertical fibres into vertical fibres (the fibre over $x$ goes to part of the fibre over $f(x)$) in such
a way that areas are preserved; if we define $a=\int_0^1 \phi(t)\, dt$ then the rectangle
$[0,a]\times [0,1]$ maps to the lower part of the square
under the graph of $\phi$ and $[a,1]\times [0,1]$ maps to the
upper part.   The resulting map $B$ preserves Lebesgue measure $\m$ on the square $S$.
$B$ necessarily has a discontinuity along the vertical line $\{x=a\}$.
Clearly $B$ is hyperbolic: through each point on the square passes a contracting leaf (vertical line) and
an expanding leaf (the graph of a measurable function).  $B$ is uniformly hyperbolic if and only if the cut
function $\phi$ is bounded away from zero and one as depicted in the Figure \ref{fig:gbt}.

\begin{figure}
\begin{center}
\includegraphics[width= 0.90\textwidth ]{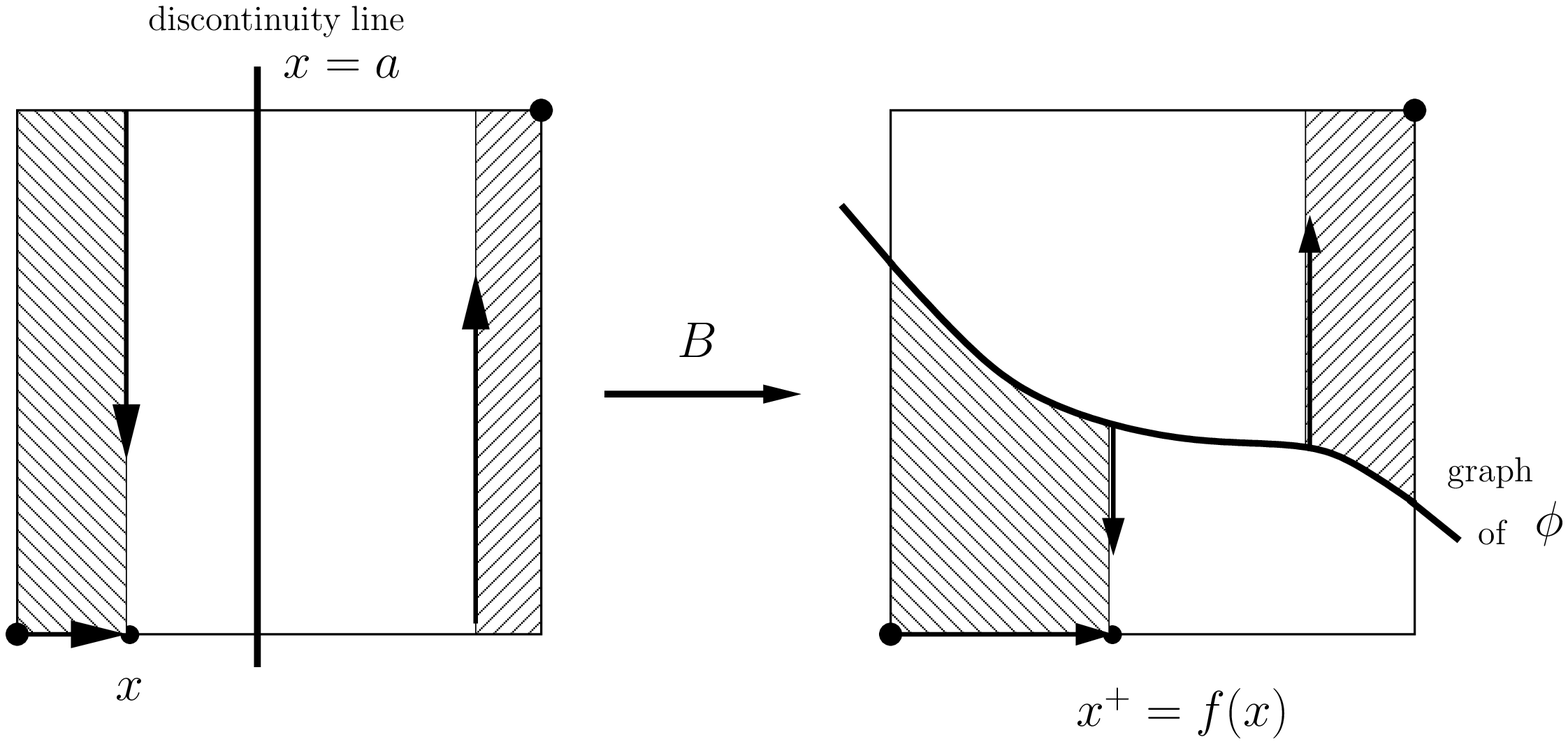}
\caption{Generalized Baker's Transformation}
\label{fig:gbt}
\end{center}
\end{figure}

When $\phi \equiv 1/2$ the map is the classical baker's transformation.

The construction was introduced in \cite{B1} where many basic dynamical properties were established.  For example, regularity  conditions on the cut allow one to conclude that $B$ is ergodic, or even Bernoulli.
Perhaps more surprisingly,  it was shown that every measure preserving transformation $T$ on a (nonatomic, standard, Borel) probability space with entropy satisfying\footnote{$\log$ will always mean the natural logarithm.  The upper bound in this inequality is not essential; any finite entropy map may be represented by a gbt, provided you allow multiple cut functions on the square.}
$0< h(T) < \log 2$ is measurably isomorphic to some GBT
on the square $S$, so in some sense, these are universal examples of measure preserving systems.

A recent literature search uncovered more than 80 articles describing generalized baker's maps,
of which the construction above represents only one possible direction. Some investigations consider only locally affine, measure preserving transformations, a minor variant of the classical example and a subcase of the present construction. There are also fat baker's tranformations -- noninvertible maps where the expansion in the unstable direction dominates contraction on vertical fibres (for example, see \cite{A-Y, Rms,T}). Generically such maps admit an absolutely continuous invariant
SRB measure. The recent article \cite{K} studies baker's transformations on non-square domains whose
expanding factors are certain $\beta-$transformations.

Our main goal in this paper is to establish sharp polynomial decay of correlation estimates for some non-uniformly hyperbolic
examples of the map $B$ above acting
on $2$-D  H\"older observables.
Obtaining sharp decay rates for multidimensional hyperbolic systems has proved to be rather difficult in general and the majority of results of this type
are in the exponential or stretched exponential class (see, for example \cite{Y1}~(Billiards),
\cite{C-Y}~(Lorentz scatterers), \cite{B-Y}~(Henon maps) for the former, and
\cite{G1}~(following\cite{V,ALP}) for the latter).
 For specific higher dimensional
families of {\em hyperbolic\/} maps
the only sharp sub-exponential results known to us are for certain billiards, in particular in a Bunimovich stadium
the rate is $O(1/n)$ (see \cite{Ma} and \cite{CZ} and references contained therein).

Although the simple geometry of our class could be viewed as artificial, it is
extremely effective for illustrating some of the obstacles (and techniques used to overcome them) that have been central to the subject in recent years.

In order to carry out our analysis, we first establish the corresponding rate-of-mixing result
on an appropriate $1$-D expanding factor $f$. This map arises naturally from the action of $B$ on
the  invariant family of `vertical fibres'; $f$ will be a piecewise monotone and continuous map of the unit
interval having
indifferent fixed points at $x=0$ and $x=1$.

Non-uniformly expanding $1-$D interval maps such as our $f$ are
currently much better understood than the corresponding multidimensional transformations.
Analysis of maps of the interval with indifferent fixed points was carried out in
 \cite{P} and references cited there.
This early work also anticipates one of the most fruitful modern approaches: the construction of a Markov Extensions or
Young towers (see  \cite{Y1,Y2}).   Indeed, we also begin by constructing a suitable
Young tower for $f$ after which, upper bounds on the rate of decay of correlation against H\"older data
are routine to obtain. In our case these rates are polynomial (the exact rate depending on which map $f$ from
the family is being considered; all polynomial mixing rates may be attained simply by the choice of
parameters leading to $f$ (and $B$)).

Recently,  \cite{C-H-V} investigates a parameterized family
of $1$-D circle maps on $[-1,1]$ proving they have polynomial mixing rates.
It turns out these maps are conjugate to certain $f$ given by our construction (see Example \ref{ex:cristadoro}). On the other hand, our examples need not be symmetric, and the $2-$D connections
we are motivated by in this paper are not investigated in \cite{C-H-V}.

Analysis of the mixing properties of $B$ proceeds by lifting the corresponding estimates for $f$ back to the square along stable fibres.  In this case, because of the simple geometry, this step is relatively simple compared to previous studies
in the literature, including the ones cited above.

Another approach to the study of non-uniformly hyperbolic maps depends on the analysis of \emph{hyperbolic times}.
In \cite{B-M} we show that, while all our examples $f$ have positive density of hyperbolic times, the first hyperbolic time function may or may not be integrable, depending on the order of tangency of the cut function to the boundary of the square at $(0,1)$ and $(1,0)$.  Indeed, it is possible to obtain sharp estimates on $m\{ h_{\sigma,\delta} > n\}$
where $m$ is Lebesgue measure and  $h_{\sigma,\delta}(x)$ denotes the first $(\sigma,\delta)$-hyperbolic time for the orbit at $x$ (see \cite{A1, A, AA, ABV} for definitions and related computations).  Analysis of hyperbolic times for our maps $f$ will not be used in this paper.

In the next section we set up the notation used throughout the paper and define our family
of baker's maps $B$. In Section~\ref{sec:youngtowers} we begin with a brief review of the Young tower construction in a form that best suits our application. In Sections~\ref{s:towers}--\ref{ss:lowerbounds} we build towers for the
$1$-D maps $f$ induced by $B$ acting on the stable leaves and establish sharp rates of correlation decay for these systems with respect to $1$-D H\"older observables.  We complete the work in Section~\ref{s:2ddecay} by lifting the $1$-D results in a natural way to identical decay estimates on the $2$-D maps $B$.

Some elementary computations (essentially calculus exercises) are gathered in Appendix 1.

\section{Generalized baker's maps}

With respect to notation from the previous section, the relevant equations are easy to derive:
$$(x,y) \mapsto (f(x),g(x,y))= B(x,y)$$
where
\begin{equation}\label{eqn:map1}\begin{array}{c}
g(x,y)=\left\{\begin{array}{ll}
\phi(f(x))\,y &\mbox{if~$x \leq a$},\\
\\
y + \phi(f(x))(1-y) &\mbox{if~$x > a$},\end{array}\right.\\
\\
\quad\textnormal{and}\quad\\
\\
x=\left\{\begin{array}{ll}
 \int_0^{f(x)}\phi(t)\,dt&
\mbox{if~$x \leq a$},\\
\\
  1- \int_{f(x)}^1 1-\phi(t) \,dt&
\mbox{if~$x > a$}.
\end{array}\right.
\end{array}
\end{equation}

Note that the function $f$ appears implicitly in
Equations (\ref{eqn:map1}).
Provided the set of $t$ where
$\phi$ takes on the
value $0$ or $1$ is of measure zero, it is easy to
see that $f$ (and hence $g$)  is uniquely defined
for every $x \in [0,1]$ (respectively, for $(x,y) \in S$).  This will be the case for
all examples in this paper.

Even without this restriction,  by construction, $B(x,y)$ is defined
by Equation (\ref{eqn:map1}) for
Lebesgue almost every point $(x,y)$ in the square $S$,
is invertible\footnote{In the usual sense of being invertible off a set of measure zero
on the square.} and preserves
two-dimensional Lebesgue measure.
For details, and a formula for $B^{-1}$ see \cite{B1}.
The sub-sigma-algebra of vertical fibres $\{x=x_0\}$ on $S$ is invariant
under\footnote{But not for
$B^{-1}$, since B maps fibres into partial fibres, in general}   $B$
and the associated (non-invertible) factor is naturally identified
with the map $f$,
a two branched, piecewise increasing map on $[0,1]$.

Define $\pi:S\rightarrow[0,1]$ by $\pi(x,y)=x$.  Then
$\pi\circ B = f\circ\pi$ encodes the factor relationship
between $f$ and $B$ and
if $\m$ denotes Lebesgue measure on $S$,
then\footnote{We adopt the standard
notation  $T_*\nu = \nu \circ T^{-1}$ for a map $T$ and measure $\nu$.}
$\pi_*(\m)=m$
is Lebesgue measure on $[0,1]$, so $f$ is also
Lebesgue measure preserving (but now on the unit interval).

From the definition of $g(x,y)$, $g(x,y)\lessgtr \phi(f(x))$ according to
whether $x\lessgtr a$; thus, the position of a point $(x,y)$ on a
vertical fibre $\pi^{-1}x$ determines the inverse
history of possible $f$-orbits, while the position $x$ specifies the future
trajectory under $x$. In this way, $B$ represents an \emph{inverse limit} or invertible cover
of the endomorphism $f$; in fact, in many cases, $B$ proves to be the \emph{natural extension}
of $f$ (for a precise treatment and conditions under which this will hold, see Section 4 of \cite{B1}).
In all our examples, $B$ will be the natural extension of $f$.

For each $n\geq 0$ the action of $B^n$ is affine on each vertical fibre,
and the skew-product character of $B$ is
emphasized through the formula:
\begin{equation}\label{eqn:Bnformula}
B^n(x,y)=(f^n(x),g_n(x,y))
\end{equation}
where $g_0(x,y)=y$ and
$$g_n(x,y) = \left\{\begin{array}{ll}
\phi(f^n(x))\,g_{n-1}(x,y)&\mbox{if~}f^{n-1}(x)\leq a\\
\phi(f^{n}(x))+(1-\phi(f^n(x)))\,g_{n-1}(x,y)&\mbox{otherwise}.\end{array}\right.$$
The geometry is illustrated in Figure \ref{fig:gbt2} for the case $n=2$.
Sometimes we'll use the notation $\tilde{\phi}={\partial_y g_1}$ for the
contractive factor on the fibres. Then $\tilde{\phi}$ depends only on~$x$, and indeed
\begin{equation}\label{eqn:lin_fibre_action}
{\partial_y g_n} = \Pi_{k=0}^{n-1}\tilde\phi(f^k(x)).
\end{equation}

Provided the cut function is smooth,
at each point $(x,y)$ in the interior of $S$ minus the vertical line $\{x = a\}$
we can compute the Jacobian matrix of $B_\alpha$ using the expressions in Equation
(\ref{eqn:map1})and the fact that $f^\prime(x) =[ \phi(f(x))]^{-1}$  for $0< x < a$
(with a similar expression for $a< x < 1$).
\begin{equation}\label{eqn:DB}
DB_\alpha(x,y) = \left\{
\begin{array}{lr}\left[
\begin{array}{cc}
\frac{1}{\phi(f(x))} & 0 \\
\\
\frac{\phi^\prime(f(x))}{\phi(f(x))}y~~&
\phi(f(x))\\
\end{array} \right] &\textnormal{if~} 0 <x < a\\
\\
\left[ \begin{array}{cc}
\frac{1}{1- \phi(f(x))} & 0 \\
&\\
\frac{\phi^\prime(f(x))}{1-\phi(f(x))}(1-y)~~&
1-\phi(f(x))\\
\end{array}\right]~~& \textnormal{if~}  a <x < 1\\ \end{array}\right.
\end{equation}
Observe
that the measure-preserving property for $B$
is again confirmed since clearly
$\textnormal{det}\,DB(x,y) = 1$.

\begin{figure}
\begin{center}
\includegraphics[width= 0.90\textwidth ]{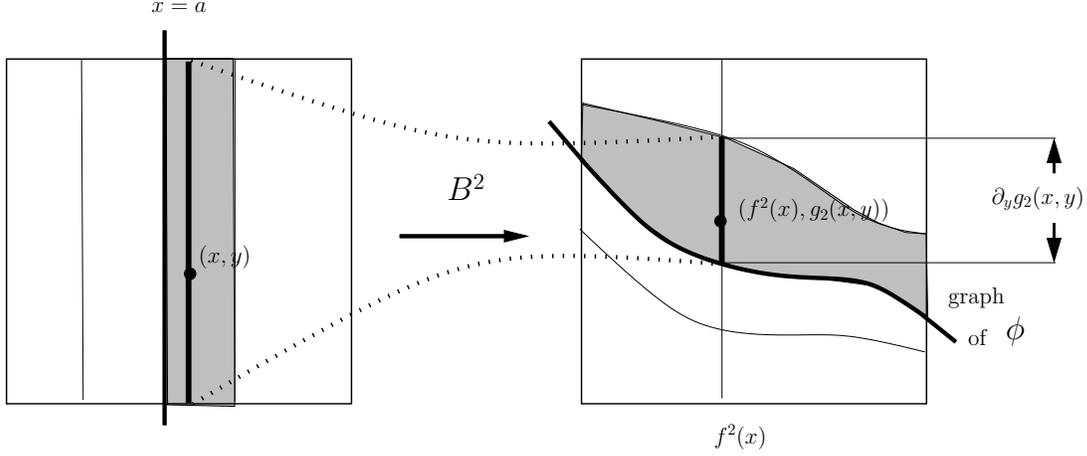}
\caption{Second iterate of a generalized baker's map acting on a vertical cylinder.}
\label{fig:gbt2}
\end{center}
\end{figure}

\subsection{The baker's family $B_{\alpha,\alpha^\prime}$}\label{sec:family}

We consider a family of generalized baker's maps indexed by two
\emph{hyperbolicity parameters}  $0< \alpha, \alpha^\prime <\infty$
through the definition of the cut function $\phi=\phi_{\alpha,\alpha^\prime}$.
Assume:

\begin{itemize}
\item  $\phi$ is  decreasing  and  $0\leq \phi \leq 1$ on $[0,1]$
\item  $\phi(0)=1$ and  there is a smooth function $g_0$ on $(0,1)$ such
that
$$\phi(t) = 1 - c_0 t^\alpha + g_0(t)$$
with $c_0 >0$ and $g_0^\prime = o(t^{\alpha -1})$  for $t$ near $0$.
\item $\phi(1)=0$ and there exists a smooth function $g_1$ on $(0,1)$ such that
$$\phi(1-t)= c_1t^{\alpha^\prime}+ g_1(t)$$
where $c_1>0$ and $g_1^\prime=o(t^{\alpha^\prime-1})$ for $t$ near $0$.
\end{itemize}

These conditions imply that the cut function $\phi=\phi_{\alpha,\alpha^\prime}$
is smooth\footnote{If $\alpha, \alpha^\prime >1$ both the cut function  $\phi$
and its derivative
extend continuously to $[0,1]$ with $\phi^\prime(0)=\phi^\prime(1)=0$. }
 on $(0,1)$ with
continuous extension to $[0,1]$ and that $0<\phi(t) <1$ for all $0< t<1$.
It follows that the map $f$ defined by Equation \ref{eqn:map1} is piecewise
strictly increasing and expanding ($f^\prime \geq 1$)
with respect to the intervals $[0,a]$ and $[a,1]$. Each branch is surjective and $C^2$
when restricted to the interior of its domain ($(0,a)$ or $(a,1)$ respectively).

\subsection{Example}\label{ex:alpha=1}
Set $\alpha=\alpha^\prime =1,~c_0=c_1=1$ and $g_i \equiv 0$. Then $\phi(x)=1-x$
and $a=1/2$.  The map $B$ is non-uniformly hyperbolic, with lines of fixed points along
$\{x=0\}$ and $\{x=1\}$

The integrals defining $f$ in (\ref{eqn:map1})
are easily computed, yielding
$$f(x) = \left\{\begin{array}{ll} 1-\sqrt{1-2\,x}~~
		&\mbox{if $x<\textstyle{\frac{1}{2}}$},\\
		\sqrt{2\,x-1}&\mbox{if $x>\textstyle{\frac{1}{2}}$}.
	\end{array}\right.$$
We emphasize again that $f$ is a
measure-preserving circle endomorphism on
$[0,1)$ with a discontinuity in
$f^\prime$ at the single point $a={1 \over 2}$, and
a neutral fixed point at $x=0$, but in this case, with quadratic order
of contact.
Thus the example does not fit into the usual picture for maps
with indifferent fixed points (eg: \cite{P,Y2} or the AFN maps of \cite{Z}) . In fact, the
branches of $f$ do not have bounded distortion in
the usual sense, since $f'(x)\rightarrow\infty$
as $x\rightarrow\frac{1}{2}$. Observe, however, that the slow escape of
mass in the neighbourhood of the neutral point $x=0$ is perfectly balanced by
a very small rate of arrival in these intervals
(for example, $f^{-1}\left([0,\epsilon)\right)\setminus[0,\epsilon)
=[\frac{1}{2},\frac{1}{2}+O(\epsilon^2))$). It is this
mechanism which allows all maps in our family
to have a finite invariant measure, despite the fixed
points being only weakly repelling.

This example has been studied previously in the literature.
It is described in \cite{Z} where it is attributed to M.\/ Thaler.
\cite{R} studied the baker's map $B$ associated to this $\phi$, proving that
it is isomorphic to a Bernoulli shift by showing that the partition into regions above
and below the cut function was weakly-Bernoulli (i.e.\ satisfying a certain mixing rate on cylinders; see Section 8 of \cite{R}).
The map $f$ also appears in
Alves-Ara\'ujo \cite{AA} as an example having a non-integrable first hyperbolic
time.

\subsection{Example}\label{ex:cristadoro}
Set $\alpha^\prime=\alpha \in (0,\infty)$, $c_0=c_1=2^{\alpha-1}$ and $g_i \equiv 0$.
Let $\phi=\phi_\alpha$ denote the cut function. Then
an easy computation shows that $a = \int \phi_\alpha = 1/2$ and
\begin{equation}
\label{eqn:phi}
\phi=\phi_\alpha(x)=\left\{\begin{array}{ll}
1-2^{\alpha -1}x^\alpha~~ &\mbox{if~$x \leq {1 \over 2}$},\\
2^{\alpha -1}(1-x)^\alpha~~ &\mbox{if~$x \geq {1 \over 2}$},\end{array}\right.
\end{equation}
Let $f_\alpha$ be the resulting $1$-D expanding map. It is straightforward to check that
this map is conjugate\footnote{Via the affine conjugacy $x \rightarrow \frac{1+x}{2}$;
the parameters satisfy $\gamma=\alpha +1$.} to the family of examples
treated in \cite{C-H-V}.  The results in the next theorem recover decay rates
obtained in that work.

\subsection{Statement of the main results}

\begin{theorem}\label{th.summary}\textnormal{[Decay of Correlations for $f$ and $B$]}
Let $\phi$, $f$ and  $B$ be as prescribed above and
set $\gamma= \max\{ \alpha, \alpha^\prime\}$.
\begin{enumerate}
\item If $\varphi$ is essentially bounded and measurable and $\psi$ is H\"older continuous on
$[0,1]$ then
$$\left|\int_0^1\varphi\circ f^n\,\psi\,dm
-\int_0^1\varphi\,dm\,\int_0^1\psi\,dm\right|=O(n^{-1/\gamma}).$$
\item
If $\varphi$ and $\psi$ are both H\"older continuous on $S$ then
$$\left|\int_S\varphi\circ B^n\,\psi\,d\m
-\int_S\varphi\,d\m\,\int_S\psi\,d\m\right|=O(n^{-1/\gamma}).$$
\end{enumerate}
If $\phi$ is symmetric (i.e.: $\phi(1-t) = 1- \phi(t)$) then in both cases
the rates above are sharp, even for Lipschitz continuous data.

\end{theorem}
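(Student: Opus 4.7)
The plan is to prove Part 1 by constructing a Young tower for $f$ and invoking Young's theorem on the return-time tail, to prove Part 2 by lifting that tower to $S$ via the skew-product structure, and to obtain the sharpness claim via matching lower bounds using Lipschitz observables tuned to the fixed-point dynamics.

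For Part 1, take the tower base to be an interval $Y = (b, 1-b)$ bounded away from both indifferent fixed points, with first-return time $R: Y \to \{1,2,\ldots\}$. The key estimate is the tail $m(R > n)$. Inverting the defining integral $x = \int_0^{f(x)} \phi$ together with $\phi(t) = 1 - c_0 t^\alpha + o(t^\alpha)$ shows $f(x) - x \sim c_0(\alpha+1)^{-1} x^{\alpha+1}$ near $0$, so the escape time from $(0, \epsilon)$ starting at $x_0$ behaves as $\sim x_0^{-\alpha}$. On the other hand, the right branch of $f$ near $x = a^+$ satisfies $f(x) \sim ((\alpha+1)(x-a)/c_0)^{1/(\alpha+1)}$, so the preimage of $(0, \epsilon)$ in $Y$ has measure $\sim \epsilon^{\alpha+1}$ (this is the only way a point in $Y$ can land in $(0,\epsilon)$ in one step). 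Setting $\epsilon \sim n^{-1/\alpha}$ to enforce escape time $\geq n$ gives a contribution $\sim n^{-1-1/\alpha}$ to $m(R>n)$; the analogous analysis at $x = 1$ yields $\sim n^{-1-1/\alpha'}$, so $m(R>n) = O(n^{-1-1/\gamma})$. With routine $C^2$ distortion estimates for the induced map on each branch, Young's theorem then yields the 1-D decay rate $O(n^{-1/\gamma})$.

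For Part 2, I would lift to $S$ by building a 2-D Young tower on $\Delta = \pi^{-1}(Y) = Y \times [0,1]$ with the same return time $R$ as the 1-D tower and return map $B^R$. Vertical fibres are stable, and $B^R$ contracts them by the factor $L_R(x) = \prod_{k=0}^{R-1} \tilde\phi(f^k(x)) \leq \tilde\phi(x) \leq \eta_0 < 1$, since $\tilde\phi$ is bounded away from $1$ on $Y$. Combined with the expansion and tail estimate on the 1-D factor, this produces a hyperbolic Young tower whose return-time tail is again $O(n^{-1-1/\gamma})$, and Young's theorem for hyperbolic systems yields the same decay rate for H\"older observables on $S$. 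For the sharpness claim in the symmetric case, I would produce matching lower bounds using Lipschitz test functions supported near a single fixed point, following standard arguments converting the return-time tail $\sim n^{-1-1/\gamma}$ into a corresponding lower bound on correlation decay; symmetry of $\phi$ ensures the contributions from the two fixed points reinforce rather than cancel. The 2-D sharpness follows by taking observables constant on vertical fibres, which reduces the 2-D correlation to the 1-D one.

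The main obstacle is the tail estimate of Part 1: the exponent $-1-1/\gamma$ arises from a delicate combination of two quite different effects — the slow $\sim x_0^{-\alpha}$ escape from a neighbourhood of a fixed point and the small $\sim \epsilon^{\alpha+1}$ measure of preimages in $Y$ entering that neighbourhood on the other branch — and one also has to check that multi-excursion orbits (bouncing between the two fixed points) contribute only at lower order. The 2-D lift is genuinely simpler here thanks to the product geometry, but still requires verifying the hyperbolic Young tower axioms.
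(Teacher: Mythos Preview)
Your Part~1 is essentially the paper's approach: a Young tower over a base interval bounded away from the fixed points, with the tail $m\{R>n\}\approx n^{-1-1/\gamma}$ coming from exactly the interplay you describe between slow escape $\sim x^{-\alpha}$ and small preimages $\sim\epsilon^{\alpha+1}$ under the opposite branch. The paper takes the base to be $[x_0,x_0')$ where $\{x_0,x_0'\}$ is the period-$2$ orbit of $f$; this choice makes $f^R$ a full-branch Markov map onto the base, so your multi-excursion concern never arises.

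Part~2 is a genuinely different route. The paper does \emph{not} build a hyperbolic Young tower on $S$. Instead it writes $\varphi\circ B^k=\varphi_0+\varphi_1+\varphi_2$ where $\varphi_0$ is constant on vertical fibres (so its correlation with $\psi$ reduces via Fubini to a 1-D correlation for $f$, handled by Part~1), $|\varphi_1|_\infty\le k^{-1/\gamma}$ on the set of orbits making $\gtrsim\log k$ visits to the base (using exactly the uniform fibre contraction you note), and $\varphi_2$ is supported on the complementary set of measure $O(k^{-1/\gamma})$. Your 2-D tower approach should also work, but the paper's argument is more elementary given the skew-product structure and avoids verifying the full axiom list of~\cite{Y1}. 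For sharpness the paper's mechanism is again different from your sketch: symmetry of $\phi$ forces $f(1-x)=1-f(x)$, so the transfer operator $L$ preserves the cone of decreasing antisymmetric densities; taking $\varphi=\psi=x$ one shows the correlation dominates $\tfrac{1}{16}|f_*^n\lambda-m|$ for an explicit $\lambda$ whose density is separated from~$1$ at the fixed point, and the latter is bounded below by $c\,n^{-1/\alpha}$ via a direct estimate. Symmetry is thus used to prevent sign cancellation in $L^n(1/2-\psi)$, not to make two fixed-point contributions add. Your appeal to general tail-to-lower-bound machinery is plausible but is the vaguest part of the proposal; those results (Sarig, Gou\"ezel) carry hypotheses you would still need to check.
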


Precise versions of the first part are given in Theorems \ref{endomorphism} and \ref{t.lower_function},
while Theorem \ref{th.2ddecay} handles the second part.

\section{Young Towers}\label{sec:youngtowers}

In order to proceed,
we outline the machinery developed in~\cite{Y1,Y2}
for analysis of non-uniformly hyperbolic dynamics using an abstract tower
extension.

The construction begins with a set $\Delta_0$, along with
a $\sigma$-algebra $\mathcal{B}_0$ of subsets  of
$\Delta_0$ and a finite measure
$\mu_0$ on $\mathcal{B}_0$.  A ($\mathcal{B}_0$-measurable)
\emph{return time function\/} $R:\Delta_0 \rightarrow \Z^+$ defines a \emph{tower}
$$\Delta := \{(z,l) ~:~ z \in \Delta_0,~ l\in \Z,~ 0 \leq l < R(z) \}.$$
Regarding $\Delta$ as a subset of $\Delta_0 \times \Z^+$,
denote $$\Delta_l = \Delta \cap (\Delta_0 \times \{l\})$$
---the $l$th level of the tower (when there is no ambiguity, we allow
the identification $\Delta_0\equiv(\Delta_0\times\{0\}$)).
The measure~$\mu_0$ is extended to the tower $\Delta$ by defining
$A \times \{l\} \subseteq \Delta_l$ to be measurable if  $A \in \mathcal{B}_0$
and setting $\mu(A \times \{l\}) := \mu_0(A)$.  Naturally, $\mu|_{\Delta_0}=\mu_0$.
$\mu$ is called the \emph{reference measure} on the tower $\Delta$.

Let $\{\Delta_{0,i}\}$ be a
measurable and countable partition of $\Delta_0$ such
that $R$ is constant on each atom of the partition.

\begin{remark}\label{rk:almost_everywhere}
We emphasize at this point that the tower construction is
carried out in the
measurable category, so for example, the term \emph{partition}
above refers to
a collection of measurable subsets which are
disjoint mod zero and whose union is $\Delta_0$ mod zero with respect to $\mu$.
Similarly, $R$ is understood to be $\mathcal{B}-$measurable
and constant $\mu-$a.e. on each $\Delta_{0,i}$.
\end{remark}

A map  $F:\Delta \rightarrow \Delta$ is provided on the tower
such that $F(z,l) = (z, l+1)$ if $l < R(z)-1$ and $F(z, R(z)-1) \in \Delta_0$.
Hence $F^R: \Delta_0 \rightarrow \Delta_0$ is the
first return map to $\Delta_0$, and $R$ can be extended to a function $\hat{R}$
on $\Delta$ as the first passage time to $\Delta_0$ ($\hat{R}(z,l)=R(z)-l$).
$F$ carries the partition of $\Delta_0$
into a partition $\eta$ of the tower:
$\Delta_{l,i} = \{(z,l)\in \Delta~:~z \in \Delta_{0,i} \}$
and one assumes that the partition generates, in the sense
that $\bigvee_{j=0}^\infty F^{-j}\eta$
separates the points of $\Delta$.
For our purposes, suppose also that $F^R: \Delta_{0,i} \rightarrow \Delta_0$
is bijective ($\mu$-a.e.) for each $i$, and both $F^R|_{\Delta_{0,i}}$
and its inverse are nonsingular with respect to $\mu$.
The Jacobian of this return map with respect to $\mu$ will be denoted by $JF^R$ and
on each $\Delta_{0,i},~JF^R >0$, again by assumption.

Regularity of functions on $\Delta$ is measured with respect to a \emph{separation
time\/} on the tower. Roughly speaking, a H\"older function will give similar values to
$x$ and $y$ if the first $n$ terms of the orbits of $x$ and $y$ visit the same
sequence of atoms of $\eta$ as one another\footnote{From this point on we simplify notation
and write $x$ instead of $(z,l)$ for points in the tower. }.
The measure of separation $s$ is defined as follows:
\begin{definition}\label{def:separation}
In the notation established above:
\begin{itemize}
\item if $x,y$ are in different atoms of $\eta$, $s(x,y)=0$;
\item if $x,y\in\Delta_{0,i_0}$, set $s(x,y)$ to be the
minimum $n>0$ such that $(F^R)^n(x), (F^R)^n(y)$ lie in different atoms $\eta$;
\item if $x,y\in\Delta_{l,i}$ put $s(x,y):=s(F^{\hat{R}}(x),F^{\hat{R}}(y))-1=s(x^\prime,y^\prime)$
where $x^\prime,y^\prime\in\Delta_{0,i}$ are the first unique preimages of $x,y$ in $\Delta_0$
under iteration by $F^{-1}$.
\end{itemize}
\end{definition}

Clearly $s<\infty$ since $\bigvee_{j=0}^\infty F^{-j}\eta$ separates points.
In fact, $s$ distinguishes two classes of H\"older functions:
for $0<\beta< 1$ 
$$C_{\beta}(\Delta) = \{\psi:\Delta \rightarrow \R : \exists \, c_\psi {\rm ~s.t.~}
\forall x,y \in \Delta, |\psi(x) - \psi(y)| \leq c_\psi \beta^{s(x,y)} \}$$
and
$$\begin{array}{rl} C_{\beta}^+(\Delta) &=  \{\psi:\Delta \rightarrow [0,\infty) : \exists \,c_\psi {\rm ~s.t.
~for~each~} l,i {\rm ~either~} \psi \equiv 0  {\rm~on~} \Delta_{l,i}\\&
{\rm ~or~} \psi >0 {\rm~on~} \Delta_{l,i}  {\rm ~and~}
|\frac{\psi(x)}{\psi(y)} -1| \leq
c_\phi \beta^{s(x,y)} ~ \forall \, x,y, \in \Delta_{l,i}\}. \end{array}$$

The regularity of $F$ is described by a H\"older condition on
the Jacobian of the maps
$(F^R|_{\Delta_{0,i}})^{-1}: \Delta_0 \mapsto \Delta_{0,i}$
(anticipating their appearance in the transfer operator for $F^R$):
we suppose there exist $0<\beta <1$ and $C$ such that
\begin{equation}\label{eq:reg}
\bigg| \frac{JF^R(x)}{JF^R(y)} - 1 \bigg| \leq C\beta^{s(F^R(x), F^R(y))}, ~ \forall~i,~ \forall
~x,y \in \Delta_{0,i}.
\end{equation}

We adopt the conventional notation
for asymptotics of sequences: $x_n = {\rm O}(y_n)$ means there exists
a constant $C < \infty$ such that for all large $n$, $x_n \leq Cy_n$ and
$x_n \approx y_n$ if both $x_n = {\rm O}(y_n)$ and $y_n = {\rm O}(x_n)$.

\begin{theorem}\label{thm:lsy}
\textnormal{[Young's Theorem~(part of Theorems 1-3) in \cite{Y2}]}
Assume the setting and notation above (including the regularity
condition~(\ref{eq:reg})). Assume also
that $\int_{\Delta_0} R\,d\mu < \infty$ and
that gcd$\{R_i\}=1$ where $R_i := R|_{\Delta_{0,i}}$.  Then,
\begin{itemize}
\item $F$ admits an absolutely continuous (w.r.t. $\mu$) invariant
probability measure $\nu$ on $\Delta$
with $\frac{d\nu}{d\mu} > 0$. Moreover, the system $(F,\nu)$ is exact.
\\

\noindent
Furthermore, if there is a constant $\gamma >0$ such that
$\mu\{\hat R >n\} ={\rm O}(n^{-\gamma})$ then:\\

\item for a probability measure $\lambda$ with $\frac{d\lambda}{d m} \in C_\beta^+(\Delta)$ we have
$$|F^n_*\lambda - \nu | = {\rm O}(n^{-\gamma});$$
\item for each $\varphi \in L^\infty$ and $\psi \in C_\beta(\Delta)$ we have
$$\bigg | \int_\Delta \, (\varphi \circ F^n) \psi\, d\nu -
\int_\Delta \varphi \, d\nu  \int_\Delta \psi \, d\nu \bigg| \leq |\varphi|_\infty\,C_\psi\,n^{-\gamma}$$
where $C_\psi<\infty$ depends on $\psi$ and the tower.
\end{itemize}

\end{theorem}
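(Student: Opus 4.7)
The statement is Young's theorem from~\cite{Y2}, so the plan is to follow her strategy. The argument decomposes into three blocks: analysis of the induced map $F^R$, lifting from $F^R$ to $F$, and a coupling argument converting the polynomial tail of $R$ into a polynomial rate for $F$. For the first block, the hypotheses (countable Markov partition $\{\Delta_{0,i}\}$, piecewise bijectivity, non-singularity of $F^R$ with respect to $\mu$, and the distortion estimate~(\ref{eq:reg})) make $F^R:\Delta_0\to\Delta_0$ a mixing Gibbs--Markov map. Standard transfer-operator arguments on a Banach space of densities measured through the separation time $s$ (for example, a Lasota--Yorke spectral gap or a Birkhoff cone contraction) yield an $F^R$-invariant probability $\nu_0\ll\mu_0$ whose density is bounded above and bounded away from $0$, and exponential decay of correlations for $(F^R,\nu_0)$. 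The generating hypothesis on $\bigvee_{j\geq 0}F^{-j}\eta$ promotes mixing to exactness of $(F^R,\nu_0)$, and $\gcd\{R_i\}=1$ rules out a periodic decomposition of the tower, so that $(F,\nu)$ will be exact as well.

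The lift is then routine. Set
$$\nu(A)=\frac{1}{\int R\,d\mu_0}\sum_{l=0}^\infty\nu_0\bigl(F^{-l}(A)\cap\{R>l\}\bigr).$$
The assumption $\int R\,d\mu<\infty$ makes $\nu$ a probability, the definition makes it $F$-invariant with $d\nu/d\mu>0$ throughout $\Delta$, and exactness of $(F,\nu)$ descends from exactness of $(F^R,\nu_0)$ via the skew-product tower structure.

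The polynomial rate is the heart of the proof and follows by coupling. Given two densities $h_1,h_2\in C_\beta^+$, run two copies of the dynamics in parallel; at each simultaneous visit of both copies to $\Delta_0$, the Markov property combined with~(\ref{eq:reg}) allows one to declare a uniform fraction $\theta\in(0,1)$ of the common mass ``matched'' and thereafter treat that part as a single measure. After $k$ successful encounters the unmatched mass is at most $(1-\theta)^k$. To convert this into a bound on $|F^n_*\lambda-\nu|$ one must guarantee $k$ common returns by time $n$; the mass still high in the tower at time $n$ contributes an additional error of order $\mu\{\hat R>m\}=\mathrm{O}(m^{-\gamma})$, where $m$ is the waiting window. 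Optimising $k$ against $m$ (with $k$ taken to be a logarithmic fraction of $n$) balances the geometric matching against the polynomial tail and delivers the advertised $\mathrm{O}(n^{-\gamma})$. The correlation estimate in the last bullet then follows by applying this convergence to $d\lambda=(\psi+c)\,d\nu/\int(\psi+c)\,d\nu$, where $c$ is chosen large enough that the density lies in $C_\beta^+$; the additive constant contributes nothing in the limit.

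The main obstacle is the coupling step: making the matching fraction $\theta$ uniform both in the tower level at which returns occur and in the pair of starting densities $(h_1,h_2)$ relies essentially on~(\ref{eq:reg}) and on the invariance of the cone $C_\beta^+$ under the transfer operator of $F^R$, while the renewal bookkeeping that trades $(1-\theta)^k$ against $\mathrm{O}(m^{-\gamma})$ to produce the optimal rate requires some care. The complete accounting is carried out in~\cite{Y2}.
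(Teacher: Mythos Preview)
The paper does not give its own proof of this theorem; it is stated as a citation of Young's results in~\cite{Y2}, with only the parenthetical remark that the correlation bound follows immediately from the speed of convergence to equilibrium (see~\cite[Section~5.1]{Y2}). Your sketch accurately reproduces Young's strategy---Gibbs--Markov analysis of $F^R$, Kac lifting to $\nu$ on the full tower, and the coupling/renewal argument for the polynomial rate---and your final sentence deriving the correlation estimate from convergence of measures matches exactly the one observation the paper does make.
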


Observe that $\mu\{\hat R > n\} = \sum_{l>n}\mu(\Delta_l)$ so the asymptotics
above are exactly the decay rate of the mass in the top of the tower.  The theorem
shows that these rates simultaneously control (i) the relaxation rates
of non-invariant measures (with suitable H\"older densities)
under iteration by $F$ to the invariant measure,
and (ii) the rate of correlation decay with respect to the invariant
measure over a large class of regular functions.
(The decay of correlation statement is slightly different to~\cite[Theorem~3]{Y2},
and follows immediately from the speed of convergence to equilibrium for
measures---see~\cite[Section~5.1]{Y2}.)

\section{Towers for $f$}\label{s:towers}

For the rest of this article we will assume that the values $\alpha, \alpha^\prime \in (0,\infty)$,
constants $c_i>0$ and functions $g_i$ defining $\phi$ have been chosen subject to the conditions
in Section \ref{sec:family}, and the baker's map $B$ and interval map $f$ are therefore determined.
We now show how the abstract tower construction applies to
our map $f$ .

Note that $f$ admits a period--$2$ orbit $\{x_0,x_0^\prime\}$ since
$f^2$ is a four-branched, piecewise continuous and onto map. We may assume
that\footnote{Let $x_0$ be the fixed point for $f^2$ on the second
branch.} $x_0 < a$ and $x_0^\prime > a$. To illustrate using
Example \ref{ex:alpha=1}, we have
$x_0=\sqrt{2}-1$ and $x_0^\prime = 2 - \sqrt{2}$.

Let $\Delta_0=[x_0,x_0^\prime)$. Let $\{x_n\}$ be
defined under the left branch of $f$ (recursively) by $f(x_n)=x_{n-1}$. Put
$J_n=[x_{n+1},x_{n})$.  A parallel construction under the right branch
yields a sequence $x_n^\prime$ and intervals $J_n' =[ x_n^\prime, x_{n+1}^\prime)$ in
$[x_0^\prime,1]$. Finally, put $I_{n+1}=f^{-1}(J_{n})\setminus J_{n+1}$
(and similarly for
$\{I_n'\}$).
Observe that the half open subintervals $I_n \subseteq (a, x_0^\prime)$
while $I_n^\prime \subseteq [x_0,a)$.
Let $R$ denote the \emph{first return time} function to $\Delta_0$. Under
the map $f$,
we have
\begin{equation}
I_k\rightarrow J_{k-1} \rightarrow J_{k-2}\rightarrow\cdots\rightarrow J_0
\rightarrow \Delta_0,
\end{equation}
and similarly for the $I_n^\prime$ and $J_n^\prime$ intervals. Note that
each application in the composition is injective and onto. Thus,
$R(x)=k+1$ when $x\in I_k^{(\prime)}$; moreover,
$f^R$ maps bijectively to $\Delta_0$ from each $I_k^{(\prime)}$.
To summarize, in the terminology of the
previous section,  the base of the tower is taken to be $\Delta_0$,
with Borel sets and Lebesgue measure $m$; $\Delta_0$
is partitioned by two infinite sets
of half-open intervals $\Delta_{0,i}=I_i\times\{0\}$ and
$\Delta_{0,i}^\prime= I_i^\prime \times\{0\}$. Then,
$R|_{\Delta_{0,i}^{(\prime)}}
= i+1$ ($i\geq 1$) and the tower is
$$\Delta = \cup_{i=1}^\infty\cup_{l=0}^i(\Delta_{l,i}\cup \Delta_{l,i}^\prime),$$
where $\Delta_{l,i}^{(\prime)}:= \Delta_{0,i}^{(\prime)}\times\{l\}$, embedding
the tower in $\Delta_0 \times \Z^+$.

The tower map is
$$F(x,l) =\left\{\begin{array}{ll} (x,l+1) &\mbox{if $l<R(x)-1$},\\
		(f^R(x),0)&\mbox{if $l=R(x)-1$ and $R=R(x)$}.\end{array}\right.$$

To establish the regularity condition~(\ref{eq:reg}) and estimate the distribution
of the tail of~$R$, we use the following asymptotics on $x_n$ and intervals $I_n$ and $J_n$.

\begin{lemma}\label{l.meas}
\begin{enumerate}
\item[(i)]  $x_n\approx \left(\frac{1}{n}\right)^{1/\alpha}$;  $1-x_n^\prime \approx
\left(\frac{1}{n}\right)^{1/{\alpha^\prime}}$
\item[(ii)] $m(J_k)\approx (\frac{1}{k})^{1+1/\alpha}$;
$m(J_k^\prime)\approx (\frac{1}{k})^{1+1/{\alpha^\prime}}$
\item[(iii)] for $x \in I_k, I_k^\prime$, $f^\prime(x) \approx k$
\item[(iv)] $m(I_k)\approx (\frac{1}{k})^{2+1/\alpha}$;
$m(I_k^\prime)\approx (\frac{1}{k})^{2+1/{\alpha^\prime}}$
item[(v)] if  $\rho>0$ then $x_k-x_{k+n}\approx x_k\frac{n}{k}$ when
$n\leq\rho\,k$.
\end{enumerate}
\end{lemma}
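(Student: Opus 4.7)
The plan is to derive part~(i) first as the standard asymptotics for iterates of a map with a neutral fixed point, after which (ii)--(v) follow as essentially bookkeeping consequences.

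For (i), invert the relation $x = \int_0^{f(x)}\phi(t)\,dt$ on the left branch to recognize that $x_n = \Phi(x_{n-1})$, where $\Phi(u) := \int_0^u \phi(t)\,dt$. The hypothesis $\phi(t) = 1 - c_0 t^\alpha + g_0(t)$ with $g_0'(t) = o(t^{\alpha-1})$ forces $g_0(t) = o(t^\alpha)$, and integrating gives
$$x_n = x_{n-1} - \frac{c_0}{\alpha+1}\,x_{n-1}^{\alpha+1} + o(x_{n-1}^{\alpha+1}).$$
Setting $y_n := x_n^{-\alpha}$ and binomial-expanding produces $y_n - y_{n-1} = \frac{\alpha c_0}{\alpha+1} + o(1)$. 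A telescoping/Cesaro argument then yields $y_n \approx n$, i.e., $x_n \approx n^{-1/\alpha}$. The same analysis on the right branch, using $\phi(1-t) = c_1 t^{\alpha'} + g_1(t)$, gives $1 - x_n' \approx n^{-1/\alpha'}$.

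Part~(ii) is now immediate from $m(J_k) = x_k - x_{k+1} \approx \frac{c_0}{\alpha+1}\,x_k^{\alpha+1} \approx k^{-(1+1/\alpha)}$, and analogously for $J_k'$. For~(iii), if $x \in I_k$ then $f(x) \in J_{k-1}$ lies near~$0$ and $f$ is the right branch there, so $f'(x) = (1-\phi(f(x)))^{-1}$; expanding $1-\phi(t) = c_0 t^\alpha + o(t^\alpha)$ gives $f'(x) \asymp (f(x))^{-\alpha} \asymp x_k^{-\alpha} \approx k$, and the analogous estimate on $I_k'$ uses the left branch together with the expansion of $\phi$ near $t=1$. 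Part~(iv) drops out of (ii) and (iii): $f'$ has bounded distortion on $I_k$ (since $f(I_k) = J_{k-1} \subset [x_k, x_{k-1})$ and $t^\alpha$ has bounded ratio there), so change of variables yields $m(I_k) \approx m(J_{k-1})/k \approx k^{-(2+1/\alpha)}$.

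For~(v), telescope $x_k - x_{k+n} = \sum_{j=k}^{k+n-1}(x_j - x_{j+1})$ and use the recursion to estimate each summand as $\approx x_j^{\alpha+1} \approx x_j/j$ (the second equivalence using $x_j^\alpha \approx 1/j$ from~(i)). When $n \leq \rho k$, the indices satisfy $j \in [k,(1+\rho)k]$, so $x_j \asymp x_k$ and $j \asymp k$ uniformly across the sum, producing $x_k - x_{k+n} \approx n\cdot x_k/k$. The only real technical point throughout is keeping the $o(\cdot)$ terms uniform as one passes from the local expansions of $\phi$ to the asymptotics of the iterates; this is genuinely routine, consistent with the authors' remark that such computations belong in Appendix~1.
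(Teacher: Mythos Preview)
Your proposal is correct and, for parts (ii)--(v), essentially identical to the paper's argument: both deduce (ii) from the recursion and (i), (iii) from the formula $f'(x)=(1-\phi(f(x)))^{-1}$ together with the expansion of $\phi$ near $0$, (iv) from (ii) and (iii) via the mean value theorem, and (v) by telescoping $x_k-x_{k+n}=\sum m(J_j)$ and using that all indices are comparable to $k$ when $n\le\rho k$.

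For part~(i) the two arguments differ in execution. You linearize by the substitution $y_n=x_n^{-\alpha}$, obtain $y_n-y_{n-1}=\frac{\alpha c_0}{\alpha+1}+o(1)$, and telescope; this is the classical approach and in fact yields the sharper statement $x_n^{-\alpha}\sim \frac{\alpha c_0}{\alpha+1}\,n$. The paper instead works from the two-sided bound $C_0^{-1}x^{1+\alpha}\le x-f^{-1}(x)\le C_0\,x^{1+\alpha}$ and uses a comparison argument: it shows that for suitable constants $z$, the sequence $x_{n+k}$ is trapped between sequences of the form $[1/(y+kz)]^{1/\alpha}$, giving the $\approx$ relation directly. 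Your route is shorter and gives the exact leading constant; the paper's route avoids the binomial expansion and works purely from the coarse two-sided bound, which is all that the $\approx$ notation requires. Either is adequate here.
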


\begin{proof} See Appendix 1. \end{proof}

The separation function $s$ is given by Definition \ref{def:separation} with respect to
the partition $\eta$ of $\Delta$, although we emphasize that $\Delta_{l,i}^\prime$
and $\Delta_{l,i} \neq \Delta_{l,i}^\prime$ are understood to be different atoms in
$\eta$ even though the value of the return time function $R$ is the same on both intervals.

\begin{lemma}\label{l:metric} There exists a
constant $\beta=\beta(f) <1$ such that if $x, y \in \Delta_0$ and
$s(x,y)=n$ then $|x-y| \leq \beta^n$
\end{lemma}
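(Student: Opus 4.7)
My plan is to show that the first-return map $f^R$ is uniformly expanding on $\Delta_0$ and then iterate the mean value theorem. Suppose we can produce $\lambda > 1$ with $(f^R)'(x) \geq \lambda$ wherever defined. If $s(x,y) = n$, then by the definition of the separation time, the iterates $(f^R)^k(x)$ and $(f^R)^k(y)$ lie in a common atom of $\eta$ for each $0 \leq k \leq n-1$. Each such atom is a (half-open) subinterval of $\Delta_0$, so each iterate of the segment $[x,y]$ stays inside a single atom, on which $f^R$ is $C^2$ and monotone. Therefore $(f^R)^n$ is $C^2$ on $[x,y]$, and iterating MVT yields $|(f^R)^n(x) - (f^R)^n(y)| \geq \lambda^n |x-y|$. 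Since both image points lie in $\Delta_0 \subset [0,1]$, the left side is at most $m(\Delta_0) \leq 1$, giving $|x-y| \leq \lambda^{-n}$, and we take $\beta := 1/\lambda$.

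To produce the expansion constant I would exploit the structure of the return tower. On $I_k$ the return map factors as $f^R = f^{k+1}$, with the orbit passing through $I_k \to J_{k-1} \to \cdots \to J_0 \to \Delta_0$. Since $f' \geq 1$ everywhere (Section \ref{sec:family}), it suffices to find a single link in this chain whose derivative is uniformly bounded away from $1$. The terminal link $f:J_0 \to \Delta_0$ supplies it: $\overline{J_0} = [x_1, x_0]$ is a compact subset of $(0,a)$ disjoint from the neutral fixed point at $0$. Using the formula $f'(x) = 1/\phi(f(x))$ on $[0, a]$, together with $f([x_1, x_0]) = [x_0, x_0']$ and the fact that $\phi$ is decreasing with $\phi(0) = 1$, we obtain $\phi(f(x)) \leq \phi(x_0) < 1$ on $J_0$, so $f'|_{J_0} \geq 1/\phi(x_0) =: \lambda_0 > 1$. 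Hence $(f^R)'(x) \geq \lambda_0$ on every $I_k$; the symmetric argument using $\overline{J_0'} = [x_0', x_1']$ and the right branch produces $\lambda_0' > 1$ on the primed atoms, and we take $\lambda := \min(\lambda_0, \lambda_0')$.

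The feature which might at first look worrisome is that for large $k$ the intermediate intervals $J_{k-1}, \ldots, J_1$ accumulate on the neutral fixed point $0$, so the intermediate derivative factors $f'(f^j(x))$ along the orbit can be arbitrarily close to $1$ and cannot be uniformly bounded away from it. This does not obstruct the argument, because those factors are still $\geq 1$ and a single guaranteed expansion per return is all that is needed here; no bounded-distortion control enters at this stage (that kind of estimate will be needed separately to verify the regularity condition (\ref{eq:reg})).
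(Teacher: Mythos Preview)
Your argument is correct and follows essentially the same route as the paper: show $(f^R)'\geq\lambda>1$ by isolating one step in the return itinerary with derivative uniformly bounded away from~$1$ and using $f'\geq 1$ on the remaining steps, then iterate via the mean value theorem. The only cosmetic difference is which step you single out: the paper observes that $f'\geq\beta^{-1}>1$ on all of $\Delta_0$ (hence the \emph{first} iterate, applied at a point of $I_k^{(\prime)}\subset\Delta_0$, already supplies the expansion), whereas you use the \emph{last} step $f:J_0^{(\prime)}\to\Delta_0$; both choices work and yield equivalent conclusions.
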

\begin{proof} Set $\beta := \min\left\{ [f^\prime(x_0^\prime)]^{-1},
[f^\prime(x_0)]^{-1}\right\} <1$
and observe that on the set $\Delta_0,~f^\prime \geq \beta^{-1}>1$,
and hence $(f^R)^\prime\geq \beta^{-1}$ (recall $f^\prime\geq 1$ everywhere).
Therefore, if $x,y$ lie in a common atom $\Delta_{0,i}^{(\prime)} \subseteq  (f^R)^{-1}[x_0,x_0^\prime]$
with $x= (f^R)^{-1}(x^\prime),~y=(f^R)^{-1}(y^\prime)$ then $|x-y|\leq \beta$. The
result follows by induction on $i \leq n$.
\end{proof}

\begin{lemma}[Uniform distortion] \label{l:distortion}
Let $y,z\in\Delta_0$ and suppose that $s(y,z)\geq 1$.
Then there is a constant $D>1$ (depending on $f$ but not $y,z$) such that
$$\left|\frac{{f^R}'(y)}{{f^R}'(z)}-1\right|\leq \frac{D\,(D-1)}{m(\Delta_0)}\,
|f^R(y)-f^R(z)|.$$
\end{lemma}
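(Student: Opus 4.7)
Since $s(y,z)\geq 1$, the points $y$ and $z$ lie in a common atom, WLOG $I_k$ (the case $I_k'$ is symmetric), so that $R(y)=R(z)=k+1$ and their orbits travel together through the Markov cascade $I_k \to J_{k-1} \to J_{k-2} \to \cdots \to J_0 \to \Delta_0$. My strategy is the classical telescoping
$$\log \frac{(f^R)'(y)}{(f^R)'(z)} \;=\; \sum_{j=0}^{k} \log \frac{f'(f^j y)}{f'(f^j z)},$$
upgraded from the usual $O(1)$ bound to one proportional to the image distance $|f^R(y) - f^R(z)|$.

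Each summand is controlled via the mean value theorem by $L_j\,|f^j y - f^j z|$, where $L_j:=\sup_{\mathcal{I}_j}|f''/f'|$ and $\mathcal{I}_j \in \{I_k,J_{k-1},\dots,J_0\}$ is the piece of the cascade containing $f^jy$ and $f^jz$. To trade pre-image for image distances I would apply the mean value theorem to the $C^2$ diffeomorphism $f^{R-j}|_{\mathcal{I}_j}: \mathcal{I}_j\to\Delta_0$, together with a plain multiplicative bounded distortion estimate (constant $D_0$) for this shorter return, to obtain
$$|f^j y - f^j z| \;\leq\; D_0\,\frac{m(\mathcal{I}_j)}{m(\Delta_0)}\,|f^R(y)-f^R(z)|.$$
The key arithmetic is then that $\sum_{j=0}^{k} L_j\,m(\mathcal{I}_j)$ stays bounded independently of $k$: this follows from Lemma~\ref{l.meas}(ii) (which gives $m(J_i)\asymp i^{-1-1/\alpha}$) together with the calculation $L_i \asymp i^{-1}$ that comes from differentiating $f'=1/\phi\circ f$ and the prescribed behaviour of $\phi$ near the fixed points. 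Consequently
$$\left|\log\frac{(f^R)'(y)}{(f^R)'(z)}\right| \;\leq\; \frac{K}{m(\Delta_0)}\,|f^R(y)-f^R(z)|$$
for a constant $K=K(f)$. Choosing $D$ to be any constant dominating both the multiplicative bounded distortion of $f^R$ on atoms and $K+1$, the elementary inequality $|e^x-1|\leq D|x|$ for $|x|\leq \log D$ converts the logarithmic estimate into the claimed multiplicative Lipschitz form, with $K$ absorbed into the single factor $D(D-1)$.

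The main obstacle will be establishing the intermediate bounded distortion constant $D_0$ uniformly in both $j$ and $k$; this is the quintessential difficulty for Young towers over maps with indifferent fixed points. It reduces to the finiteness of a sum of the same shape, $\sum_i L_i\,m(J_i)$, so the argument essentially bootstraps once the sharp asymptotics of Lemma~\ref{l.meas} are in hand. The $C^2$ structure of each branch of $f$, coupled with the fact that the monotone cascade prevents repeated visits to neighbourhoods of $x=0$ and $x=1$ prior to return, ensures that every series that appears converges; the remainder is bookkeeping.
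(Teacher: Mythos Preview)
Your plan is correct and is essentially the paper's own argument: telescope $\log(f^R)'$, first establish the crude bound $\sum_i L_i\,m(J_i)<\infty$ to obtain a uniform distortion constant $D_0=e^C$ for all the partial returns $f^{R-j}|_{\mathcal I_j}$, then bootstrap this into $|f^jy-f^jz|\le D_0\,\frac{m(\mathcal I_j)}{m(\Delta_0)}\,|f^Ry-f^Rz|$ and re-sum to get the Lipschitz-in-image estimate, and finally exponentiate. Two small corrections worth making when you write it out: on $J_i$ one actually has $|f''/f'|\asymp i^{1/\alpha-1}$ (not $i^{-1}$), so that $L_i\,m(J_i)\asymp i^{-2}$, which is still summable; and the initial step $I_k\to J_{k-1}$ lies under the \emph{other} branch of $f$, where $f'=1/(1-\phi\circ f)$ and the denominator is small, so it requires a separate estimate---the paper treats it on its own and finds its contribution to the sum is $\asymp 1/k$.
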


\begin{proof} See Appendix 1. \end{proof}

\begin{remark}\label{rem:nonsingF}
The ambient measure $\mu_0$ from the abstract
tower construction is chosen to be Lebesgue measure
$m|_{[x_0, x^\prime_0)}$. Its lift to the tower
$\Delta$ under $F$ is the product of
Lebesgue measure  with  counting measure restricted to $\Delta$,   which
we will denote by $m_\Delta$.
Note, however, that since
$m$ is invariant for $f$, $m|_{\Delta_0}$ is
$f^R$--invariant on $\Delta_0$. Since
$F^R(x)= f^R(x)~\forall~ x \in \Delta_0$, $m_\Delta$
is $F-$ invariant on the tower.  Therefore $F^R$ and its inverse satisfy
the required
nonsingularity assumption
as maps between $\Delta_{0,i}^{(\prime)}$ and $\Delta_0$.
\end{remark}

\section{Mixing rates I -- upper bounds for the tower map $(F,\Delta)$}\label{ss:upperbounds}

Recall that  $m_\Delta$ denotes the
product of Lebesgue measure with counting measure
on the tower $\Delta$.

\begin{theorem} \label{tower}
Fix $f$ be as in the previous section and
any $\beta \geq \beta(f)$ as in Lemma \ref{l:metric}.
Set $\gamma=\max\{ \alpha, \alpha^\prime\}$.
Then
\begin{enumerate}
\item $m_\Delta(\Delta)=1$ and $m_\Delta$ is the unique
absolutely continuous  $F-$invariant probability measure on $\Delta$.
Moreover,
the system $(F,m_\Delta)$ is exact, hence ergodic and mixing.
\item For each absolutely continuous probability
measure $\lambda$ such that
$\frac{d\lambda}{dm_\Delta} \in C_\beta^+$ we have
$$|F^n_*\lambda - m_\Delta| = O(n^{-\frac{1}{\gamma}})$$
\item For every $\varphi \in L^\infty(\Delta)$
and $\psi \in C_\beta(\Delta)$  we have
$$\bigg| \int \varphi \circ F^n ~\psi \, dm_\Delta - \int \varphi \, dm_\Delta \int \psi \,
dm_\Delta \bigg| \leq |\varphi|_\infty\,C_\psi\,n^{-\frac{1}{\gamma}}$$
where $C_\psi<\infty$ depends only on $\psi$ and $f$.
\end{enumerate}\end{theorem}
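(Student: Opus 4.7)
The plan is to invoke Young's theorem (Theorem \ref{thm:lsy}) with base $\Delta_0$, reference measure $\mu = m|_{\Delta_0}$, and the tower map $F$ from Section \ref{s:towers}. I need to verify the four hypotheses of Theorem \ref{thm:lsy}, then identify $m_\Delta$ with the invariant measure $\nu$ that the theorem produces.

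First I would verify the Jacobian regularity (\ref{eq:reg}). Since $F^R = f^R$ on $\Delta_0$, Lemma \ref{l:distortion} gives $|JF^R(x)/JF^R(y) - 1| \leq C_0\,|F^R(x) - F^R(y)|$ for $x,y$ in a common $\Delta_{0,i}$ with $s(x,y)\geq 1$. Applying Lemma \ref{l:metric} to the pair $F^R(x), F^R(y) \in \Delta_0$ converts this to $C_0\,\beta^{s(F^R(x),\,F^R(y))}$, which is precisely (\ref{eq:reg}).

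Next I would handle the three remaining hypotheses. Each branch $f^R\colon I_k^{(\prime)}\to\Delta_0$ is bijective and $f$ preserves Lebesgue measure, so Kac's formula yields $\int_{\Delta_0} R\,dm = m([0,1]) = 1$; in particular $m_\Delta(\Delta)=1$ and $R$ is integrable. Since $R|_{I_1}=2$ and $R|_{I_2}=3$, $\gcd\{R_i\}=1$. For the tail, using $R = k+1$ on $I_k\cup I_k'$ together with Lemma \ref{l.meas}(iv),
$$m|_{\Delta_0}(\{R>k\}) \;=\; \sum_{j\geq k}\bigl(m(I_j)+m(I_j')\bigr) \;\asymp\; k^{-1-1/\gamma},$$
where $\gamma = \max\{\alpha,\alpha'\}$ produces the slower of the two rates. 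Summing once more,
$$m_\Delta\bigl(\{\hat R > n\}\bigr) \;=\; \sum_{k>n} m|_{\Delta_0}(\{R>k\}) \;=\; O(n^{-1/\gamma}),$$
which is exactly the polynomial tail hypothesis in Theorem \ref{thm:lsy} with exponent $1/\gamma$.

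With all hypotheses verified, Theorem \ref{thm:lsy} produces an absolutely continuous $F$-invariant probability $\nu$ such that $(F,\nu)$ is exact. By Remark \ref{rem:nonsingF}, $m_\Delta$ is itself an $F$-invariant probability, and its constant density $1$ with respect to $\mu$ trivially lies in $C_\beta^+$. Applying the pushforward estimate of Theorem \ref{thm:lsy} to $\lambda = m_\Delta$ forces $|m_\Delta - \nu| = O(n^{-1/\gamma})$ for every $n$, hence $m_\Delta = \nu$. Conclusions (1), (2), (3) of Theorem \ref{tower} are then the three bullets of Theorem \ref{thm:lsy} applied in this setting, with the freedom to replace $\beta(f)$ by any larger $\beta < 1$ coming from the fact that enlarging $\beta$ only weakens the regularity bound.

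The bulk of the analytic work is already packaged in Lemmas \ref{l.meas}, \ref{l:metric} and \ref{l:distortion}, so the remaining difficulty is bookkeeping. The two pitfalls are (i) keeping $s(x,y)$ and $s(F^R(x),F^R(y))$ straight when stitching together the distortion and contraction estimates to obtain (\ref{eq:reg}), and (ii) noticing that $\gamma = \max\{\alpha,\alpha'\}$ is the \emph{larger} exponent precisely because it governs the \emph{slower} tail. Neither is deep, but both are easy to get wrong on a first pass.
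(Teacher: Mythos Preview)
Your proposal is correct and follows the paper's overall strategy: verify the hypotheses of Theorem~\ref{thm:lsy} via Lemmas~\ref{l.meas}, \ref{l:metric} and~\ref{l:distortion}, then identify the invariant measure $\nu$ produced by Young's theorem with $m_\Delta$. You diverge from the paper in two small but tidy ways. First, you obtain $\int_{\Delta_0}R\,dm = 1$ (hence $m_\Delta(\Delta)=1$) directly from Kac's formula, whereas the paper bounds $\sum_k (k+1)\,m(I_k\cup I_k')$ using Lemma~\ref{l.meas}(iv) only to show finiteness. Second, to conclude $m_\Delta=\nu$ you apply the speed-of-convergence bound of Theorem~\ref{thm:lsy} with $\lambda=m_\Delta$ (whose constant density $1$ lies in $C_\beta^+$), forcing the $n$-independent quantity $|m_\Delta-\nu|$ to vanish; the paper instead argues via a Lebesgue/ergodic decomposition $m_\Delta=p\,\nu+(1-p)\,\nu_\perp$ and uses equivalence of $m_\Delta$ and $\nu$ to kill the singular part. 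Both routes are legitimate; your Kac step in fact supplies the normalisation $m_\Delta(\Delta)=1$ that the paper's decomposition argument tacitly needs, and your convergence argument avoids any appeal to decomposition theory.
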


\begin{proof} (1)  Since $F$ is non-singular with respect to $m_\Delta$ (see Remark~\ref{rem:nonsingF}), Lemmas~\ref{l:metric} and~\ref{l:distortion} give the regularity estimate (\ref{eq:reg}) on
the tower map $F$  with $\beta:= \beta(f)$, $D:=D(f)$ and
$C:= \frac{D(D-1)}{m(\Delta_0)}$
(one simply observes that $|f^R(y)-f^R(z)| \leq \beta^{s(f^R(y),f^R(z))}$
and that $F^R = f^R$). It follows that (\ref{eq:reg}) is satisfied
for every $\beta \geq \beta(f)$.
Next, using Lemma \ref{l.meas} we can estimate
$$\int_{\Delta_0} R(x)\, dm(x) = \sum_{k=1}^\infty (k+1)m(I_k \cup I_k^\prime)\leq K
\sum_{k=1}^\infty (k+1)\bigg(\frac{1}{k}\bigg)^{2+ \frac{1}{\gamma}} < \infty$$
for some constant $K$.
Moreover, this shows
$$\int_{\Delta_0} R(x)\, dm(x) =
O\left(\sum_{k=1}^\infty \bigg(\frac{1}{k}\bigg)^{1+ \frac{1}{\gamma}}\right)$$

Finally, we note that the values
taken by the return time function are $R= 2,3, \dots$ so the $\gcd$ condition
in Theorem \ref{thm:lsy} also holds.
Applying the theorem to our
tower yields an invariant measure $\nu$ on $\Delta$ equivalent
(i.e. mutually absolutely continuous) to $m_\Delta$.
Since the latter is already $F-$invariant, we claim $m_\Delta=\nu$.

To confirm this, note that since $\nu$ is ergodic we can decompose $m_\Delta=p\,\nu+(1-p)\,\nu_\perp$
where $\nu$ and $\nu_\perp$ are mutually singular. If there is a set $A$ such that
$\nu_\perp(A)>0$ but $\nu(A)=0$ then $m_\Delta(A)=0$ since $m_\Delta$ and $\nu$ are
equivalent measures. Hence $(1-p)=0$, establishing the claim.

Conclusions (2)-(3) of Theorem \ref{thm:lsy} also apply since
$$m_\Delta(\hat R > n)= \sum_{l>n}m_\Delta(\Delta_l) =  \sum_{l>n} (l-n)\, m(I_l \cup I_l^\prime) \approx \bigg( \frac{1}{n}\bigg)^{\frac{1}{\gamma}} $$
(by Lemma \ref{l.meas}).
\end{proof}

\section{Mixing rates II -- upper bounds for the factor map $(f,[0,1]$)}

The tower $(F,\Delta)$ provides a
representation for the dynamics of $f$
oriented around the {\it induced
transformation\/} $f^R$ of first returns to $\Delta_0$.
In order to interpret the mixing results of Theorem~\ref{tower}
in terms of the original map $f$ we first
extract $f$ as a factor of $F$.

For $(x,l) \in \Delta$ define
$$\Phi(x,l)=f^l(x)$$
(For convenience set $f(a)=0$ which is consistent with viewing $f$ as a
continuous circle endomorphism).
Now:

\begin{itemize}
\item $\Phi|_{\Delta_0} \equiv \mbox{id}_{[x_0,x^\prime_0)}$
\item For $l >0$ , $\Phi$ maps $\Delta_l$ injectively onto $[0,x_0)\cup[x^\prime_0,1)$
\item $\Phi^{-1}(J_k) =
\bigcup_{l=1}^\infty I_{l+k}\times\{l\}$ (with a similar equality for $\cdot'$)
\item There exists a $D^\prime$ such that for
all $l<i$, if $A\subseteq I_{0,i}\times\{l\}= \Delta_{l,i}$
then
\begin{equation}\label{eqn:bddDofftower}
{D^\prime}^{-1}\leq\frac{m(A)}{m(I_i)}\,\frac{m(J_{i-l})}{m(\Phi(A))}\leq D^\prime
\end{equation}
(with a similar inequality for $\cdot'$).
\item The semi-conjugacy property:
$$\begin{array}{rl}\Phi\circ F(x,l) &= \left\{\begin{array}{ll}
\Phi(f^{l+1}(x),0) &\mbox{if $x\in\Delta_{0,l}$},\\
\Phi(x,l+1) &\mbox{if $x \in \Delta_{0,k},~ k>l$} \end{array}
\right. \\[1.5em]
&=f^{l+1}(x) =f(f^l(x))=f\circ\Phi(x,l).\end{array}$$
\item That $\Phi_*m_\Delta = m_{[0,1]}$.  This computation can be done by
bare hands, or one can use the $F$--invariance of $m_\Delta$ as follows:
From Theorem~\ref{tower} we know that $f_*\Phi_*m_\Delta=\Phi_*F_*m_\Delta=\Phi_*m_\Delta$,
and since $(F,m_\Delta)$ is ergodic, $(f,\Phi_*m_\Delta)$ is ergodic. Moreover,
$m_{[0,1]}\ll\Phi_*m_\Delta$ by the distortion relation~(\ref{eqn:bddDofftower}),
so equality of the two measures follows by the same argument as in the proof
of Theorem~\ref{tower}(1).
\end{itemize}

Now suppose $\psi$ is $\zeta$--H\"older continuous\footnote{Meaning,
$|\psi(x) - \psi(y)| \leq C |x-y|^\zeta$, for some $C,~\zeta>0$ and all $x,y$.} as a function on $[0,1]$,
and denote $\hat \psi:=\psi\circ\Phi$ (the natural lift to $\Delta$).

\begin{lemma} \label{lift} Let $\beta = \beta(f)$ from Lemma \ref{l:metric}. If $\psi$ is a  $\gamma$--H\"older
then  $\hat\psi \in C_{\beta_0}(\Delta)$, where $\beta_0= \beta^{\gamma}$.
\end{lemma}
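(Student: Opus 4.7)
The plan is to use the semi-conjugacy $f\circ\Phi=\Phi\circ F$, together with the fact that $f'\geq 1$ everywhere, to show that $\Phi$ is non-expanding in a strong sense: once we push into $\Delta_0$, a bound on tower separation yields a bound on $|\Phi(x)-\Phi(y)|$ with no extra distortion penalty. Once that is done, applying the $\gamma$-H\"older estimate on $\psi$ immediately delivers a $C_{\beta^\gamma}$ bound on $\hat\psi$.

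First I would dispose of the trivial case $s(x,y)=0$, i.e.\ $x$ and $y$ in different atoms of $\eta$. Since $\psi$ is continuous on the compact interval $[0,1]$ it is bounded, say by $M$, so $|\hat\psi(x)-\hat\psi(y)|\leq 2M=2M\,\beta_0^{\,0}$, and any final constant $c_{\hat\psi}\geq 2M$ absorbs this case.

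The core step is the estimate
$$|\Phi(x)-\Phi(y)|\leq \beta^{s(x,y)}\quad\text{whenever $x,y$ lie in a common atom $\Delta_{l,i}^{(\prime)}$.}$$
When $l=0$, $\Phi$ is the identity on $\Delta_0$ and this is exactly Lemma~\ref{l:metric}. For $l\geq 1$, I would let $x^\ast=F^{\hat R}(x)$ and $y^\ast=F^{\hat R}(y)$, both of which lie in $\Delta_0$. By the recursive clause of Definition~\ref{def:separation}, $s(x^\ast,y^\ast)=s(x,y)+1$, so Lemma~\ref{l:metric} gives $|x^\ast-y^\ast|\leq \beta^{s(x,y)+1}$. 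Using the semi-conjugacy iteratively, $x^\ast=f^{R-l}(\Phi(x))$ and $y^\ast=f^{R-l}(\Phi(y))$; because $f'\geq 1$ on each branch, so is $(f^{R-l})'$ along the smooth branch that carries $J_{i-l}^{(\prime)}$ onto $\Delta_0$. Hence $f^{R-l}$ is non-contracting there and
$$|\Phi(x)-\Phi(y)|\leq |f^{R-l}(\Phi(x))-f^{R-l}(\Phi(y))|=|x^\ast-y^\ast|\leq \beta^{s(x,y)+1}\leq \beta^{s(x,y)}.$$

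With the geometric claim in hand, the conclusion is direct: for $x,y$ in the same atom,
$$|\hat\psi(x)-\hat\psi(y)|=|\psi(\Phi(x))-\psi(\Phi(y))|\leq c_\psi\,|\Phi(x)-\Phi(y)|^{\gamma}\leq c_\psi\,\beta^{\gamma\,s(x,y)}=c_\psi\,\beta_0^{s(x,y)},$$
which combined with the trivial case yields $\hat\psi\in C_{\beta_0}(\Delta)$ with constant $c_{\hat\psi}=\max\{c_\psi,2M\}$. I do not anticipate a serious obstacle: the only subtlety is correctly tracking the $\pm 1$ shift in the separation time when moving between levels of the tower, which the displayed estimate absorbs because $\beta<1$. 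The step where one must be careful to invoke $f'\geq 1$ (rather than any more delicate distortion bound) is what makes the argument clean and avoids any dependence on the level $l$ or the atom index $i$.
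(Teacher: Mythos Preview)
Your argument is essentially the paper's own: dispose of $s=0$ by boundedness of $\psi$, then for points in a common atom use $f'\geq 1$ to bound $|\Phi(x)-\Phi(y)|$ by the distance of the forward images in $\Delta_0$, apply Lemma~\ref{l:metric}, and finish with the H\"older estimate. One caveat on the $\pm 1$ you flagged: in fact $s(F^{\hat R}x,F^{\hat R}y)=s(x,y)-1$, not $+1$ (the ``$-1$'' in Definition~\ref{def:separation} is a typo, as the companion formula $s(x,y)=s(x',y')$ and the paper's own proof make clear), so the correct bound is $|\Phi(x)-\Phi(y)|\leq\beta^{s(x,y)-1}$ and your constant becomes $c_\psi\beta^{-\gamma}$ rather than $c_\psi$---harmless for the conclusion, and exactly what the paper obtains.
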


\begin{proof}
We need to check the regularity condition on $\hat\psi$. First,
if $(x,l), (y,k)$ are not on the same level of the tower, then $s((x,l),(y,k))=0$
and we estimate (for any choice of $\beta$)
$$|\hat\psi(x,l) - \hat\psi (y,k)| \leq 2\,|\psi|_\infty \beta^0$$
In fact, the same inequality holds also whenever $s((x,l),(y,l))=0$ on the same level
of the tower in which case $c_\psi = 2\,\|\psi\|_\infty$ will do the job.
Now suppose $s((x,l),(y,l))=n>0$.
Then, with $C$ and $\zeta>0$ from the H\"older condition on $\psi$ and applying
Lemma \ref{l:metric} we obtain
$$\begin{array}{rl} |\hat \psi(x,l) - \hat \psi (y,l)| &= |\psi(f^l(x))-\psi(f^l(y))|\\
&\leq C |f^l(x) - f^l(y)|^\zeta\\ &\leq C|(F^R(x))-(F^R(y))|^\zeta\\
&\leq C \beta^{(n-1)\zeta} = C \beta^{-\zeta} (\beta^\zeta)^n
\end{array},$$
where we have used $s(F^R(x),F^R(y))= n-1$.
Therefore it suffices to take $c_{\hat\psi}=\max\{C \beta^{-\zeta}, 2\,|\psi|_\infty\}$ and
$\beta_0 = \beta^\zeta$ in the definition of $C_{\beta_0}(\Delta)$.
\end{proof}

\begin{theorem}\label{endomorphism}
Let $\gamma = \max\{ \alpha, \alpha^\prime\}$.
\begin{enumerate}
\item The system $(f, m)$ is exact and hence $B$ acting
on $S$ is a K-automorphism.
\item If $d\lambda = \psi\, dm$ is any absolutely continuous probability
measure with $\psi$ H\"older continuous, then
$$|f_*^n \lambda - m| = O(n^{-\frac{1}{\gamma}}).$$
\item
If $\varphi \in L^\infty[0,1]$ and
$\psi:[0,1] \rightarrow \R$ is H\"older continuous, then

$$\bigg| \int_0^1 \varphi\circ f^n\,\psi\, dm -\int_0^1 \varphi \, dm
\int_0^1 \psi \, dm \bigg| \leq |\varphi|_\infty\,C_\psi\,n^{-\frac{1}{\gamma}}$$
where $C_\psi<\infty$ depends only on~$\psi$ and $f$.\end{enumerate}
\end{theorem}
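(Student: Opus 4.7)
The plan is to descend each assertion of Theorem~\ref{endomorphism} from the corresponding statement in Theorem~\ref{tower} via the factor map $\Phi:\Delta\to[0,1]$, using the semi-conjugacy $\Phi\circ F=f\circ\Phi$, the identity $\Phi_*m_\Delta=m$, and Lemma~\ref{lift} (which translates Hölder regularity on $[0,1]$ into $C_{\beta_0}$-regularity on the tower, with $\beta_0=\beta(f)^\zeta$). All three parts are then essentially formal diagram-chases once the bullet points preceding Lemma~\ref{lift} are in hand.

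For (1), I would push down exactness by a standard factor argument: any set $A\subset[0,1]$ in the tail $\sigma$-algebra of $(f,m)$ pulls back under $\Phi$ to a set in the tail $\sigma$-algebra of $(F,m_\Delta)$, which is $m_\Delta$-trivial by Theorem~\ref{tower}(1); since $\Phi_*m_\Delta=m$, triviality transfers to $(f,m)$. That $B$ is a $K$-automorphism then follows from the fact (recorded earlier in the paper and proved in \cite{B1}) that $B$ is the natural extension of $f$, combined with Rokhlin's theorem that the natural extension of an exact endomorphism is a $K$-automorphism.

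For (2), lift $\lambda=\psi\,dm$ to $\Delta$ by setting $d\hat\lambda:=(\psi\circ\Phi)\,dm_\Delta$. A change of variables using $\Phi_*m_\Delta=m$ shows $\Phi_*\hat\lambda=\lambda$ (and in particular $\hat\lambda$ is a probability measure). Lemma~\ref{lift} yields the Hölder estimate for $\hat\psi=\psi\circ\Phi$, and positivity of $\psi$ upgrades this to $\hat\psi\in C_{\beta_0}^+(\Delta)$. Theorem~\ref{tower}(2) then gives $|F^n_*\hat\lambda-m_\Delta|=O(n^{-1/\gamma})$. Pushing forward by $\Phi$ and using the semi-conjugacy, $\Phi_*F^n_*\hat\lambda=f^n_*\Phi_*\hat\lambda=f^n_*\lambda$ while $\Phi_*m_\Delta=m$; since total variation does not increase under pushforward, $|f^n_*\lambda-m|=O(n^{-1/\gamma})$. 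For (3), the parallel lifting $\hat\varphi:=\varphi\circ\Phi\in L^\infty(\Delta)$, $\hat\psi:=\psi\circ\Phi\in C_{\beta_0}(\Delta)$ identifies the correlation integrals on $[0,1]$ with those on $\Delta$ (the semi-conjugacy handles the $f^n$, and $\Phi_*m_\Delta=m$ handles each single integral), and Theorem~\ref{tower}(3) supplies the bound with a constant depending only on $\psi$ and $f$.

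The one point requiring care is verifying $\hat\psi\in C_{\beta_0}^+$ in part (2): the ratio bound requires $\psi$ to be bounded away from zero, which I would take as implicit in ``Hölder density of an absolutely continuous probability measure.'' If $\psi$ is permitted to vanish then one must check the $C^+$-atomic dichotomy---that on each $\Delta_{l,i}$ the lifted density is either identically zero or strictly positive---possibly after refining the partition $\eta$. This is the only technicality; everything else is transport of the mixing rates from $(F,m_\Delta)$ through $\Phi$, and the essential work has already been carried out in Theorem~\ref{tower} and Lemma~\ref{lift}.
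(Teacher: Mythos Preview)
Your proposal is correct and matches the paper's approach almost exactly: the paper's proof is equally terse, simply citing the factor relation $\Phi_*m_\Delta=m$, Lemma~\ref{lift}, and Theorem~\ref{tower}. The $C_{\beta_0}^+$ issue you flag for part~(2) is real (a H\"older density can vanish), but you can sidestep it entirely: since the bound in part~(3) carries the explicit factor $|\varphi|_\infty$ with $C_\psi$ independent of $\varphi$, taking the supremum over $\varphi\in L^\infty$ with $|\varphi|_\infty\le 1$ and using $\int\psi\,dm=1$ gives $|f_*^n\lambda-m|\le C_\psi\,n^{-1/\gamma}$ directly from~(3), so no appeal to the $C_\beta^+$ class is needed.
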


\begin{proof} Denote again by $m_\Delta$ Lebesgue measure on the
tower.
Since $(f, m)$ is a factor of the exact system $(F, m_\Delta)$, it is also exact,
and hence its natural extension $B$ on $S$ is a $K-$automorphism.
Next we may assume $\zeta \leq 1$ in the H\"older condition,
so
$\beta^{\zeta} \geq \beta$.
Finally, observe the elementary identity
$$\int_{[0,1]} q(x) dm(x) = \int_{[0,1]} q(x) d\Phi_* m_\Delta = \int_\Delta \hat q dm_\Delta$$
Now an application of Lemma \ref{lift}, combined
with the decay of correlations result in Theorem~\ref{tower}, using the value of $\beta^\zeta
\geq \beta(f)$ yields the result.
\end{proof}

\section{Mixing rates III -- lower bounds for the factor map $(f,[0,1])$
}\label{ss:lowerbounds}

The upper bounds on speed of convergence to equilibrium and correlation
decay obtained in Theorem~\ref{endomorphism} in parts (2) and (3)
are in fact sharp in many situations.

We first treat the measure decay result, where
lower bounds on the decay rate are effectively determined by the behaviour
of initial densities in the neighbourhoods of the indifferent fixed points
at $0$ and $1$.  The argument is quite intuitive.

We say a probability
measure {\em $\lambda$ is separated from $m$ at $x$} if either
$$\limsup_{\epsilon\rightarrow 0^+}{\textstyle
\frac{\lambda(x-\epsilon,x+\epsilon)}{m(x-\epsilon,x+\epsilon)}} < 1
\mbox{~or~}\liminf_{\epsilon\rightarrow 0^+}{\textstyle
\frac{\lambda(x-\epsilon,x+\epsilon)}{m(x-\epsilon,x+\epsilon)}} > 1.$$

\begin{theorem}
\label{t.lower_measure}\textnormal{ [Sharp decay rates for measures] }
 Let $\lambda\ll m$ be a probability measure on $[0,1]$
such that $\varphi:=\frac{d\lambda}{dm}\in L^\infty$.
If $\lambda$ is separated from $m$ at $0$
then for $n\in\N$, $|{f_*}^n\lambda-m|\geq c\,n^{-1/\alpha}$
($c>0$ is a constant depending on $\lambda$ and $\alpha$).
If $\lambda$ is separated from $m$ at 1, the same result holds
with $\alpha$ replaced by $\alpha^\prime$.
\end{theorem}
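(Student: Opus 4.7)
The plan is to exhibit an explicit test interval $B_n = [0, x_{Kn})$ for a large integer $K$ depending on $\alpha$ and on the separation parameter, and bound the single-set discrepancy $|f^n_*\lambda(B_n) - m(B_n)|$ from below by a constant multiple of $n^{-1/\alpha}$. Since $f^n_*\lambda-m$ is a signed measure of total mass zero, any such one-set bound transfers (up to a factor of two) to the total variation $|f^n_*\lambda - m|$ we want to estimate.

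Consider first the subcase $\liminf_{\epsilon\to 0^+}\lambda(-\epsilon,\epsilon)/m(-\epsilon,\epsilon) > 1$, which means that for some $\delta>0$ one has $\lambda([0,\epsilon)) \geq (1+2\delta)\epsilon$ for all sufficiently small $\epsilon>0$. The geometric key is the fact that iterating the left branch gives a bijection $f^n\colon[0,x_{j+n})\to[0,x_j)$ for every $j\geq 0$, and hence the inclusion $[0,x_{(K+1)n})\subseteq f^{-n}[0,x_{Kn})$. Once $n$ is large enough that $x_{(K+1)n}$ lies below the separation threshold,
$$
f^n_*\lambda(B_n) - m(B_n)\;\geq\; \lambda([0,x_{(K+1)n}))- x_{Kn} \;\geq\; (1+2\delta)\,x_{(K+1)n}-x_{Kn}.
$$
Applying Lemma~\ref{l.meas}(v) with $k=Kn$ and step $n$ (so $n/k=1/K$, valid for $K$ sufficiently large) gives $x_{Kn}-x_{(K+1)n}\leq C_1\, x_{Kn}/K$ with $C_1$ depending only on $f$. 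The displayed expression is therefore at least $x_{Kn}\bigl[\,2\delta - C_1(1+2\delta)/K\,\bigr]$, which exceeds $\delta\, x_{Kn}$ once $K>C_1(1+2\delta)/\delta$. Combining with $x_{Kn}\geq c(Kn)^{-1/\alpha}$ from Lemma~\ref{l.meas}(i) yields the desired bound.

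For the complementary subcase $\limsup_{\epsilon\to 0^+}\lambda(-\epsilon,\epsilon)/m(-\epsilon,\epsilon) < 1$, use a reflection trick: choose any $C>|\varphi|_\infty$ and set $\psi := (C-\varphi)/(C-1)$. Then $\psi$ is a bounded nonnegative density with $\int\psi\,dm=1$, and a direct computation shows $\liminf_{\epsilon\to 0^+}\int_0^\epsilon\psi\,dm/\epsilon>1$, so $\psi\,dm$ falls into the first subcase. Since $\mathcal{L}1=1$, linearity of the transfer operator gives $\mathcal{L}^n\psi - 1 = -(\mathcal{L}^n\varphi-1)/(C-1)$, so $|f^n_*\lambda-m|=(C-1)|f^n_*(\psi\,m)-m|$ and the lower bound transfers verbatim. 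Separation at $1$ is handled by the mirror argument built on the right branch, the intervals $[x'_{Kn},1)$, and the estimate $1-x'_n\approx n^{-1/\alpha'}$ from Lemma~\ref{l.meas}(i). The main obstacle is the calibration of $K$ so that $x_{(K+1)n}/x_{Kn}$ is close enough to $1$ to absorb the leak $x_{Kn}-x_{(K+1)n}$; this reflects quantitatively the slow escape of mass from the neutral fixed point and is controlled by Lemma~\ref{l.meas}(v).
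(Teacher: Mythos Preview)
Your argument is correct and follows the same test-interval strategy as the paper: pick $B_n=[0,x_{Mn})$ for a large multiplier $M$, compare $f_*^n\lambda$ and $m$ on $B_n$, and use Lemma~\ref{l.meas}(v) to show that the ``leak'' $x_{Mn}-x_{(M+1)n}\approx x_{Mn}/M$ is negligible once $M$ is large; then handle the complementary separation case by an affine reflection of the density.

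The one genuine difference is which case you treat directly. The paper starts from $\limsup_{\epsilon\to 0}\lambda[0,\epsilon)/\epsilon<1$ and therefore needs an \emph{upper} bound on $f_*^n\lambda[0,u)$; this forces the full decomposition $f^{-n}[0,u)=[0,v)\cup A_n$ and the use of $|\varphi|_\infty$ to control $\lambda(A_n)$. You instead start from $\liminf>1$ and need only a \emph{lower} bound, for which the bare inclusion $[0,x_{(K+1)n})\subseteq f^{-n}[0,x_{Kn})$ suffices and the $L^\infty$ hypothesis on $\varphi$ plays no role in this step (it enters only later, to make the reflected density $\psi=(C-\varphi)/(C-1)$ nonnegative). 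This is a small but clean simplification of the direct half; the calibration of $K$ via Lemma~\ref{l.meas}(v) and the reflection trick are otherwise the same in both proofs.
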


While it is possible for correlations to decay faster than the
rate specified in Theorem \ref{endomorphism}, $L^\infty$ initial densities which differ
slightly from their equilibrium value at the indifferent fixed points
{\em must decay slowly\/}.

\begin{proof}  We consider the case of a measure $\lambda$
separated from $m$ at zero.  The proof of the second part of the theorem is
identical.

Suppose first that $\limsup_{x\rightarrow 0}\frac{\lambda[0,x]}{x}<1$.
Let $\epsilon,\delta>0$ be such that $\lambda[0,u) < (1-\delta)\,u$ for all
$u\in(0,\epsilon)$. Write $f^{-n}[0,u) = [0,v)\cup A_n$ where $f^n(v)=u$
and $A_n$ is a union of $2^n-1$ subintervals of $(v, 1]$.
Then, ${f_*}^n\lambda[0,u)\leq \lambda[0,v)
+ |\frac{d\lambda}{dm}|_\infty\,m(A_n)$.
Since $m$ is $f$ invariant, $u=m[0,u]=m\circ f^{-n}[0,u] = v+m(A_n)$. Now
let $u=x_k$, where $k$ is large enough that $x_k<\epsilon$ and $k\geq n$. Then,
$v=x_{k+n}$ and
$${f_*}^n\lambda[0,u)\leq (1-\delta)\,v + \left|\textstyle\frac{d\lambda}{dm}
\right|_\infty\,(u-v)
\leq (1-\delta)\,x_k +
\left|\textstyle\frac{d\lambda}{dm}\right|_\infty\,c_2\,x_k\,
\textstyle\frac{n}{k}$$
(where the finite $c_2$ is chosen corresponding to $\rho=1$ in
Lemma~\ref{l.meas}~(v)).
Now, choose $N\in\N$
such that $\frac{|\frac{d\lambda}{dm}|_\infty\,c_2}{N}<\frac{\delta}{2}$
and $x_N<\epsilon$. Using $u=x_k=x_{nN}$,
$${f_*}^n\lambda[0,x_{nN})\leq (1-\delta)\,x_{nN} +
\textstyle\frac{\delta}{2}\,x_{nN}.$$
Consequently,
$|{f_*}^n\lambda-m|\geq |{f_*}^n\lambda[0,x_{nN})-m[0,x_{nN})|
\geq\frac{\delta}{2}\,x_{nN}
\geq\frac{\delta}{2}\,c_1\,\left(\textstyle\frac{1}{nN}\right)^{1/\alpha}$,
by Lemma~\ref{l.meas}~(i).

Now suppose $\liminf_{x\rightarrow 0}\frac{\lambda[0,x]}{x}>1$ and let
$\psi=\frac{d\lambda}{d m}$. Let $\lambda^\prime = \left(1-\frac{\psi-1}{|\psi-1|_\infty}\right)\,m$.
Then the proof of the first part of the lemma applies to $\lambda^\prime$ and
$|f_*^n\lambda^\prime-m|=|f_*^n\lambda-m|/|\psi-1|_\infty$.
\end{proof}

It is more delicate to obtain lower bounds on the decay rates of regular (ie: H\"older) functions.
One approach is to exploit symmetry of the cut function, when this is available.

We say that the cut function $\phi$ is symmetric if
\begin{equation}
\label{eqn:sym_cut}
1- \phi(t) = \phi(1-t) \textnormal{ for all } t \in [0,1]
\end{equation}
Equivalently, $\alpha =\alpha^\prime$, $c_0 = c_1$ and $g_0 = g_1$.

It follows that $a=\int_0^1 \phi (t)\, dt= 1/2$ and $x_n^\prime = 1-x_n$ for every $n$.
Note that Examples \ref{ex:alpha=1} and \ref{ex:cristadoro} satisfy this condition.

\begin{theorem}\label{t.lower_function}\textnormal{[Sharp decay rates for H\"older data]}
Suppose the cut function $\phi$ satisfies symmetry equation (\ref{eqn:sym_cut}).
Then
there are Lipschitz functions~$\varphi,\psi$ and a constant~$c_\alpha$
such that
$$\left|\int_0^1\varphi\circ f^n\,\psi\,dm -\int_0^1\varphi\,dm\int_0^1\psi\,dm\right|
\geq c_\alpha\,n^{-1/\alpha}.$$
\end{theorem}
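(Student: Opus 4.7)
The plan is to exploit the symmetry hypothesis to combine Theorem~\ref{t.lower_measure} with a Lipschitz test pair $(\varphi,\psi)$ so that the slow-relaxation contributions at the two indifferent fixed points add rather than cancel.

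I would take $\varphi(x):=1-6x(1-x)$ and $\psi(x):=1-\tau\varphi(x)$ for some small $\tau\in(0,1)$. Both are Lipschitz and symmetric about~$1/2$; also $\int\varphi\,dm=0$, $\int\psi\,dm=1$, and $\psi(0)=\psi(1)=1-\tau<1$, so $\psi\,m$ is separated from $m$ at both endpoints. Write $\rho_n$ for the density of $f^n_*(\psi m)-m$ with respect to $m$ (a signed $L^1$ function with zero integral). The proof of Theorem~\ref{t.lower_measure} actually gives the signed inequality $f^n_*(\psi m)[0,x_{nN})\leq(1-\delta/2)x_{nN}$ in the regime $\psi(0)<1$; applied at each endpoint and using that $\rho_n$ is symmetric about $1/2$ (by symmetry of $f$), this produces $\delta>0$ and $N\in\N$ such that
$$\int_{E_n}\rho_n\,dm\leq-\delta\,x_{nN},\qquad E_n:=[0,x_{nN}]\cup[1-x_{nN},1],$$
for all large~$n$.

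Since $\int\varphi\,dm=0$ and $\int\psi\,dm=1$, the correlation simplifies to $C_n(\varphi,\psi)=\int\varphi\,\rho_n\,dm$, which I would split as $\int_{E_n}+\int_{E_n^c}$. On~$E_n$, Lipschitz continuity $|\varphi-\varphi(0)|\leq L_\varphi\,x_{nN}$ combined with the upper bound $\|\rho_n\|_{L^1}=O(n^{-1/\alpha})=O(x_{nN})$ from Theorem~\ref{endomorphism} yields
$$\int_{E_n}\varphi\,\rho_n\,dm=\varphi(0)\int_{E_n}\rho_n\,dm+O(x_{nN}^2)\leq-\delta\,x_{nN}+O(x_{nN}^2),$$
since $\varphi(0)=1$.

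The main obstacle is the bulk estimate $\bigl|\int_{E_n^c}\varphi\,\rho_n\,dm\bigr|\leq(\delta/2)\,x_{nN}$ for large~$n$. The mechanism is that the $n^{-1/\alpha}$ sluggishness of $\|\rho_n\|_{L^1}$ comes entirely from mass that lingers at high levels of the tower~$\Delta$ (since $m_\Delta(\hat R>n)\approx n^{-1/\alpha}$), and under the projection $\Phi\colon\Delta\to[0,1]$ these high levels map into the $E_n$-neighborhoods of~$\{0,1\}$ (cf.~Lemma~\ref{l.meas}(ii),(iv)); on the low levels the induced map $f^R$ is uniformly hyperbolic and contributes at most exponentially small error to $\rho_n$ on $E_n^c$. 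A careful bookkeeping with the tower transfer operator yields $\|\rho_n\|_{L^1(E_n^c)}=o(n^{-1/\alpha})$, controlling the bulk. Combining, $|C_n(\varphi,\psi)|\geq(\delta/2)x_{nN}\geq c_\alpha n^{-1/\alpha}$ by Lemma~\ref{l.meas}(i). The symmetry hypothesis is essential: without it, the signed boundary contributions at~$0$ and~$1$ could cancel.
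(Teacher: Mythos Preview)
Your argument has a genuine gap at the ``bulk estimate'' $\|\rho_n\|_{L^1(E_n^c)}=o(n^{-1/\alpha})$. You acknowledge this as the main obstacle and then dispatch it with a gesture at ``careful bookkeeping with the tower transfer operator,'' but this is precisely the hard part and you have not carried it out. The heuristic---that the slowly decaying part of $\rho_n$ lives on high tower levels, which project into $E_n$---is not obviously sufficient: mass that has lingered near a fixed point for most of the time interval $[0,n]$ and only recently escaped into $\Delta_0$ contributes to $\rho_n$ on $E_n^c$, and controlling this leakage at the rate $o(n^{-1/\alpha})$ would require something like operator-renewal asymptotics (\`a la Sarig or Gou\"ezel), not merely the spectral gap of $f^R$. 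Without this, nothing prevents $\int_{E_n^c}\varphi\,\rho_n\,dm$ from cancelling the boundary contribution.

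The paper's proof sidesteps this entirely by a different and much sharper use of the symmetry. It takes $\varphi(x)=\psi(x)=x$, so that $1/2-\psi$ is \emph{anti-symmetric} and decreasing, and proves (Lemma~\ref{lem:slowmix}) that under the symmetry hypothesis on $\phi$ the transfer operator $L$ preserves the cone of decreasing anti-symmetric functions. Hence $L^n(1/2-\psi)\geq 0$ on $(0,1/2)$ and $\leq 0$ on $(1/2,1)$ for every~$n$, so the integrand $(1/2-\varphi)\,L^n(1/2-\psi)$ is pointwise non-negative on all of $[0,1]$. This global sign control yields the correlation $\geq\tfrac{1}{16}\,|f_*^n\lambda-m|$ directly, after which Theorem~\ref{t.lower_measure} finishes the job; no boundary/bulk splitting is needed. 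Your symmetric (rather than anti-symmetric) test functions cannot support such a cone argument, which is why you are forced into the unproven localization claim.
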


The proof is in Appendix 2.

\section{Mixing rates IV -- polynomial decay of correlations for~$(B_\alpha,\m)$}\label{s:2ddecay}

Suppose $\varphi, \psi$ are two bounded measurable functions on a Borel probability space  $(X, p)$ and
$T$ is a measure preserving map on $X$. We write
\newcommand\Cor{\textnormal{Cor}}

$$\Cor_n(\varphi,\psi) = \left|\int_X\varphi\circ T^n\,\psi\,dp
-\int_X\varphi\,dp\,\int_X\psi\,dp\right|.$$

\begin{theorem}\label{th.2ddecay}
Let $\phi$ be a cut function as detailed in Section \ref{sec:family}, let  $B$
be the associated baker's transformation and set $\gamma= \max\{ \alpha, \alpha^\prime\}$.
If $\varphi$ and $\psi$ are H\"older continuous on $S$ then with respect to the measure $\m$ we have
$$\Cor_n(\varphi,\psi)=O(n^{-1/\gamma}).$$
The constant in the order notation depends on $\varphi,\psi$ and $\gamma$.
If $\phi$ satisfies the symmetry condition (\ref{eqn:sym_cut}),
 there are $\varphi,\psi$ for which this rate is sharp.
\end{theorem}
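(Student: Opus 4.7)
The strategy is to lift Theorem~\ref{endomorphism} from the $1$-D factor to the $2$-D map by exploiting the vertical contraction of $B^k$. From~(\ref{eqn:lin_fibre_action}), $B^k$ maps the vertical fibre over $x$ onto a subinterval of the fibre above $f^k(x)$ of length $\tilde\phi_k(x)=\prod_{j=0}^{k-1}\tilde\phi(f^jx)=1/(f^k)'(x)$. Consequently, if $\varphi$ is $\zeta$-H\"older on $S$, then
$$|\varphi(B^k(x,y))-\varphi(B^k(x,y'))|\leq c_\varphi\,\tilde\phi_k(x)^{\zeta}\quad\text{for all } y,y'\in[0,1],$$
so $\varphi\circ B^k$ is close on each fibre to the function of $x$ alone
$H_k(x):=\int_0^1\varphi(B^k(x,y))\,dy$.

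For the upper bound, fix $k=\lfloor n/2\rfloor$, write $\varphi\circ B^n=(\varphi\circ B^k)\circ B^{n-k}$, and decompose
$$\int_S(\varphi\circ B^n)\,\psi\,d\m = \int_S(H_k\circ\pi\circ B^{n-k})\,\psi\,d\m + E_1.$$
By $f$-invariance of $m$ and the pointwise estimate above, $|E_1|\leq c_\varphi|\psi|_\infty\int_0^1\tilde\phi_k^{\zeta}\,dm$. The semi-conjugacy $\pi\circ B=f\circ\pi$ and Fubini reduce the main term to $\int_0^1 (H_k\circ f^{n-k})\,\bar\psi\,dm$, where $\bar\psi(x):=\int_0^1\psi(x,y)\,dy$ is $\zeta$-H\"older with constant depending only on $\psi$, and $|H_k|_\infty\leq|\varphi|_\infty$. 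Theorem~\ref{endomorphism}(3) yields the bound $|\varphi|_\infty C_{\bar\psi}(n-k)^{-1/\gamma}$ for the difference from $\int H_k\,dm\cdot\int\bar\psi\,dm$, and this product equals $\int_S\varphi\,d\m\cdot\int_S\psi\,d\m$ by $B$-invariance of $\m$ and Fubini. Combining,
$$\Cor_n(\varphi,\psi)\leq C(n-k)^{-1/\gamma} + C\int_0^1\tilde\phi_k^{\zeta}\,dm,$$
so the conclusion follows once $\int_0^1\tilde\phi_k^{\zeta}\,dm=O(k^{-1/\gamma})$.

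The main obstacle is this last integral estimate. Since $\tilde\phi=1/f'$, the proof of Lemma~\ref{l:metric} gives $\tilde\phi\leq\beta<1$ on $\Delta_0$, while $\tilde\phi\leq 1$ elsewhere; hence $\tilde\phi_k(x)\leq\beta^{N_k(x)}$, where $N_k(x)$ counts visits of $x,f(x),\ldots,f^{k-1}(x)$ to $\Delta_0$. The event $\{N_k=0\}$ equals $\{T>k\}$, with $T$ the first passage time to $\Delta_0$; the computation in the proof of Theorem~\ref{tower} shows that $m\{T>k\}=O(k^{-1/\gamma})$. More generally the distribution of $N_k$ is controlled by convolutions of the return-time distribution, whose tail is also $O(k^{-1/\gamma})$; partitioning $[0,1]$ by $N_k$ and using the exponential damping $\beta^{\zeta N_k}$ shows that the contribution from small $N_k$ dominates and gives the desired $O(k^{-1/\gamma})$ bound.

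For the sharp lower bound in the symmetric case ($\gamma=\alpha$), let $\varphi_0,\psi_0$ be the Lipschitz pair from Theorem~\ref{t.lower_function} and lift trivially to $S$ as $\varphi(x,y):=\varphi_0(x)$, $\psi(x,y):=\psi_0(x)$; these are Lipschitz on $S$. Since $\varphi\circ B^n(x,y)=\varphi_0(f^n(x))$ depends only on $x$, Fubini gives $\Cor_n^B(\varphi,\psi)=\Cor_n^f(\varphi_0,\psi_0)\geq c_\alpha n^{-1/\alpha}$, matching the upper bound.
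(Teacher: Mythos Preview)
Your approach is essentially the paper's, with only cosmetic reorganisation. The paper also replaces $\varphi\circ B^k$ by a fibre-constant function and controls the discrepancy through the number $N_k$ of visits to $\Delta_0$; the difference is that the paper uses a hard threshold $N\approx\log k$ and a three-term good/bad decomposition $\varphi\circ B^k=\varphi_0+\varphi_1+\varphi_2$ (Lemma~\ref{lem:decompose}) in place of your fibre average $H_k$ and the soft integral $\int\tilde\phi_k^{\zeta}\,dm\le\int\beta^{\zeta N_k}\,dm$. Your version is arguably tidier, but the substantive step is identical: your sentence about ``convolutions of the return-time distribution'' is precisely what the Claim in Lemma~\ref{lem:decompose} proves, via a union bound $\{N_k<N\}\subset\{\tau_1>k/2\}\cup\bigcup_{i<N}\{\tau_{i+1}-\tau_i>k/(2N)\}$ together with $f^R$-invariance of $m|_{\Delta_0}$ and bounded distortion of $f^{\tau_1}$ to get $m\{\tau_{i+1}-\tau_i>t\}\lesssim t^{-1-1/\gamma}$. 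That argument is not quite automatic (the successive return times are not i.i.d.\ under $m$ on $[0,1]$, only under $m|_{\Delta_0}$), so you should spell it out rather than gesture at convolutions. The lower-bound lift is identical to the paper's.
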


The proof proceeds in the expected fashion:
by applying the $1$-dimensional decay result for $f$
to suitably chosen $\varphi_0$ that depend only on the ``future'' (that is,
are $\varphi_0$ that are constant on vertical fibres). If $\varphi_0(x,y)$ depends
only on $x$ then $\overline{\varphi}=\varphi_0\circ\pi^{-1}$ has an unambiguous
definition (recall $\pi(x,y)=x$), and hence
\begin{equation}\label{eqn:corindpast}
\Cor_n(\varphi_0,\psi) = \left|\int_0^1\overline{\varphi}\circ f^n\,\overline{\psi}(x)\,dm
- \int_0^1\overline{\varphi}\,dm\,\int_0^1\overline{\psi}(x)\,dm\right|
\end{equation}
where $\overline{\psi}(x)= \int_0^1\psi(x,y)\,dm(y)$.

{\em Proof of (\ref{eqn:corindpast}):\/} Since
$\varphi_0(x^\prime,y^\prime)=\varphi_0(x^\prime,0)$ for each $(x^\prime,y^\prime)$
$$\varphi_0\circ B^n(x,y)=\varphi_0(f^n(x),g_n(x,y))=\varphi_0\circ\pi^{-1}(f^n(x))=\overline{\varphi}\circ f^n(x)$$
(see~(\ref{eqn:Bnformula})). Hence, by Fubini's theorem,
$$\int_S \varphi_0\circ B^n\,\psi\,d\m 
= \int_0^1\overline{\varphi}(f^n(x))\int_0^1 \psi(x,y)\,dm(y)\,dm(x)
=\int_0^1\overline{\varphi}\circ f^n\overline{\psi}\,dm.$$
Since,
$$\int_S\varphi_0\,d\m = \int_S\varphi_0 d(\pi_*m)=\int_0^1\varphi_0\circ\pi^{-1}\,dm
\qquad\mbox{and}\qquad
\int_S\psi\,d\m = \int_0^1\overline{\psi}\,dm$$
the proof is complete.\qquad$\Box$

It is evident that the lower bounds on the rate of correlation decay
obtained for $f$ in Theorem \ref{t.lower_function}
carry over to $B$: simply extend the one-dimensional functions to vertical fibres
by translation. Lifting the upper bounds requires more work, and exploits the
fact that for a H\"older continuous $\varphi$, $\varphi\circ B^n$ is very nearly constant
on ``most'' fibres when $n$ is large.

\begin{lemma}\label{lem:decompose} Let $\varphi$ be H\"older continuous on~$S$. Let
$B$ and $\gamma$ be as defined in Theorem \ref{th.2ddecay}. Then
there is a constant $C$ such that for
each sufficiently large $k$ there are functions $\varphi_0,\varphi_1,\varphi_2$
such that
$$\varphi\circ B^k = \varphi_0+\varphi_1+\varphi_2$$
where
\begin{itemize}
\item $\varphi_0$ is constant on vertical fibres and $|\varphi_0|_\infty\leq |\varphi|_\infty$,
\item $|\varphi_1|_\infty\leq k^{-1/\gamma}$ and
\item $|\varphi_2|_{L^1}\leq C\,|\varphi|_\infty\,k^{-1/\gamma}$.
\end{itemize}\end{lemma}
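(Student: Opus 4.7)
The plan is to decompose $\varphi\circ B^k$ according to how strongly $B^k$ contracts vertical fibres. Combining~(\ref{eqn:lin_fibre_action}) with the elementary identity $\tilde\phi(x)=1/f^\prime(x)$ (obtained by differentiating the implicit formula for $f$ in~(\ref{eqn:map1})), the fibre-contraction ratio of $B^k$ at $x$ is
$$P_k(x):=\prod_{j=0}^{k-1}\tilde\phi(f^j(x))=\bigl[(f^k)^\prime(x)\bigr]^{-1}.$$
Let $\zeta\in(0,1]$ denote the H\"older exponent of $\varphi$, and set $\varphi_0(x,y):=\varphi(B^k(x,\frac{1}{2}))$. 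Then $\varphi_0$ is constant on vertical fibres, $|\varphi_0|_\infty\leq|\varphi|_\infty$, and since $B^k(x,y)$ and $B^k(x,\frac{1}{2})$ lie in the same fibre at distance at most $P_k(x)/2$, we obtain the pointwise bound
$$|\varphi\circ B^k(x,y)-\varphi_0(x,y)|\leq |\varphi|_\zeta\,P_k(x)^\zeta.$$

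Next, partition $[0,1]$ into a good set $G_k:=\{x:|\varphi|_\zeta P_k(x)^\zeta\leq k^{-1/\gamma}\}$ and its complement $E_k$, and set
$$\varphi_1:=(\varphi\circ B^k-\varphi_0)\,\mathbf{1}_{G_k\times[0,1]},\qquad \varphi_2:=(\varphi\circ B^k-\varphi_0)\,\mathbf{1}_{E_k\times[0,1]}.$$
Then $|\varphi_1|_\infty\leq k^{-1/\gamma}$ by construction, while $|\varphi_2|_{L^1}\leq 2|\varphi|_\infty\,m(E_k)$. The lemma therefore reduces to the measure bound $m(E_k)=O(k^{-1/\gamma})$.

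To bound $m(E_k)$, observe that $\tilde\phi$ is bounded above by some $\beta<1$ on $\Delta_0$ (explicitly, $\beta=\max\{\phi(x_0^\prime),1-\phi(x_0)\}$: $\tilde\phi$ is monotone on each half of $\Delta_0$ and vanishes at $a$). Hence $P_k(x)\leq\beta^{N_k(x)}$, where $N_k(x):=\#\{0\leq j<k:f^j(x)\in\Delta_0\}$, and $x\in E_k$ forces $N_k(x)\leq M_k:=C_1\log k$ for some $C_1=C_1(\beta,\zeta,\gamma,|\varphi|_\zeta)$. Writing $T_M=\hat R+R_2+\cdots+R_M$ for the time of the $M$-th visit to $\Delta_0$ under $f$ (with $\hat R$ the first passage and the $R_i$ subsequent first returns), we have $E_k\subseteq\{T_{M_k}>k\}$, so
$$m(E_k)\leq m\{\hat R>k/2\}+m\{R_2+\cdots+R_{M_k}>k/2\}.$$
The first term is $O(k^{-1/\gamma})$ since $\{\hat R>n\}=[0,x_n)\cup[x_n^\prime,1]$ has measure $\approx n^{-1/\gamma}$ by Lemma~\ref{l.meas}(i). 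For the second, we argue on the base $\Delta_0$, where Lemma~\ref{l.meas}(iv) gives the sharper tail $m|_{\Delta_0}\{R>n\}\approx n^{-1-1/\gamma}$; a union bound on $\max_{i\geq 2}R_i$ then yields $O(M_k^{2+1/\gamma}k^{-1-1/\gamma})=O((\log k)^{2+1/\gamma}k^{-1-1/\gamma})=o(k^{-1/\gamma})$.

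The main technical obstacle is the measure transfer in the last step, since the return times $R_i$ are stationary under $m|_{\Delta_0}$ rather than under $m$ on $[0,1]$. This is handled using the tower picture: the first-hit map $\pi(x):=f^{\hat R(x)}(x)$ satisfies $\pi_*m\leq C\,m|_{\Delta_0}$ for a finite constant $C$, which follows from the bounded distortion of $f^R$ on each $I_i^{(\prime)}$ (Lemma~\ref{l:distortion}) combined with the Kac identity $\int_{\Delta_0}R\,dm=1$. With this in hand, the union bound on $\Delta_0$ transfers to $[0,1]$ up to the multiplicative constant $C$, completing the estimate of $m(E_k)$.
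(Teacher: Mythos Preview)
Your argument is correct and follows essentially the same route as the paper: both decompose according to whether the orbit has made enough visits to $\Delta_0$ in the first $k$ steps, and both bound the measure of the bad set via the same splitting into first-passage and subsequent returns, using bounded distortion of the first-hit map to transfer the return-time tail from $m|_{\Delta_0}$ to $m$. The only differences are cosmetic---you define $\varphi_0$ globally (centred at $y=\tfrac12$) rather than cut off on the good set, and you phrase the good set via the contraction ratio $P_k$ before reducing to the visit count, whereas the paper goes directly to the visit count; neither change affects the substance of the proof.
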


\begin{proof} Let $k$ be fixed.
We begin with some notation: let $\hat \Delta_0=[x_0,x^\prime_0)\times[0,1]\subset S$
(where $\{x_0,x^\prime_0\}$ is the period~$2$ orbit of~$f$ from Section \ref{s:towers})
and let
$$\beta=\sup_{x\in[x_0,x^\prime_0)}\max\{\phi(x),1-\phi(x)\}.$$
Then, when $B(x,y)\in\hat\Delta_0$, $\tilde{\phi}(x)\leq \beta$ (see equation~(\ref{eqn:lin_fibre_action})),
so vertical fibres are contracted by at least $\beta$ every time the orbit visits $\hat\Delta_0$.
If an orbit segment $\{B^n(x,y)~:~0\leq n<k\}$ has made at least $N$ visits to $\hat\Delta_0$ then
\begin{equation}\label{eqn:fibrecontraction}
|B^k(x,y)-B^k(x,y^\prime)| = |g_k(x,y)-g_k(x,y^\prime)| = {\partial_y g_k}\,|y-y^\prime|\leq \beta^N
\end{equation}
(again, see (\ref{eqn:lin_fibre_action}) and note that $0\leq \tilde\phi\leq 1$). If $\varphi$ is $\zeta$--H\"older
then there is a constant $C_\varphi$ such that
$|\varphi(x,y)-\varphi(x^\prime,y^\prime)|\leq C_\varphi |(x,y)-(x^\prime,y^\prime)|^\zeta$.
Choose $N$ such that $C_\varphi (\beta^N)^\zeta\leq k^{-1/\gamma}$. Clearly $N\approx\log k \ll k$.
Next, define a ``good set''
$$
G_k = \left\{(x,y)\in S~:~ B^{n_j}(x,y)\in \hat\Delta_0\textnormal{~for~}n_1<\cdots<n_N<k\right\}
$$
and put $\varphi_0(x,y)=(\varphi\circ B^k)(x,0)\mathbf{1}_{G_k}(x,y)$,
$\varphi_1=(\varphi\circ B^k)\mathbf{1}_{G_k}-\varphi_0$
and $\varphi_2=(\varphi\circ B^k)\,\mathbf{1}_{S\setminus G_k}$.

Since $\varphi_0$ takes only values of $\varphi$ (and $0$ outside $G_k$),
$|\varphi_0|_\infty\leq |\varphi|_\infty$. Moreover,
since $B^n(x,y)\in\hat\Delta_0$ if and only if $f^n(x)\in[x_0,x^\prime_0)$, $G_k$ is a union of vertical
fibres, so $\mathbf{1}_{G_k}(x,y)$ depends only on $x$. This establishes the claimed properties
of $\varphi_0$.

For $\varphi_1$, if $(x,y)\in G_k$ then $\{B^n(x,y)\}_{0\leq n\leq k}$ has made at least~$N$ visits
to $\hat \Delta_0$, so
$$|\varphi(B^k(x,y))-\varphi(B^k(x,y^\prime)| \leq C_\phi\,(\beta^N)^\zeta \leq k^{-1/\gamma}$$
by the H\"older property, (\ref{eqn:fibrecontraction}) and the choice of~$N$.

{\em Claim:\/} There are constants $c_1$ and $c_2$ (independent of $k$) such that for all large enough~$k$
$$\m\{S\setminus G_k\} \leq c_1\,k^{-1/\gamma}+c_2\,N^{2+1/\gamma}\,k^{-1-1/\gamma}.$$

{\em Proof of the lemma, given the claim:\/} All that remains is to control $\varphi_2$. Since
$N$ grows like $\log k$, taking $C=c_1+1$ gives $\m\{S\setminus G_k\} \leq C\,k^{-1/\gamma}$
for all large enough~$k$. The bound on $|\varphi_2|_{L^1}$ follows.

{\em Proof of claim:\/} Let $$\tau_1(x)=\min\{ n\geq 0~:~ B^n(x,y)\in\hat\Delta_0\}=\min\{n\geq 0~:~f^n(x)\in\Delta_0\}$$
and $\tau_{i+1}(x) = \tau_{i}(x)+R(f^{\tau_i(x)}(x))$ where $R$ is the usual
return time function to the ``base of the tower''~$\Delta_0$. Note that $f^{\tau_i}=(f^R)^{i-1}\circ f^{\tau_1}$.
Let $$H_k=\{x~:~\tau_1(x)\leq k/2\textnormal{~and~} \tau_{i+1}(x)-\tau_i(x)\leq k /2N,\quad i=1,\ldots,N-1\}.$$
Clearly, $H_k\times[0,1]\subset G_k$ so
\begin{eqnarray}\label{e.estGk1}
\m\{S\setminus G_k\} \leq m\{[0,1]\setminus H_k\} &\leq& m\{\tau_1>k/2\}
+\sum_{i=1}^{N-1} m\{\tau_{i+1}-\tau_i>k /2N\}\nonumber\\
&=&\sum_{j+1> k/2}m(J_j\cup J^\prime_j)\\
&&\qquad\qquad + \sum_{i=1}^{N-1}m\circ (f^R)^{-(i-1)}\{R\circ f^{\tau_1}>k/2N\}\nonumber
\end{eqnarray}
using $\tau_1|_{J_j^{(\prime)}}=j+1$ and the definition of $\tau_{i+1}$.
Next, $m|_{\Delta_0}$ is invariant under $f^R$, so
\begin{eqnarray}\label{e.estGk2}
\sum_{i=1}^{N-1}m\circ (f^R)^{-(i-1)}\{R\circ f^{\tau_1}>k/2N\}
&=&(N-1)\, m \{R\circ f^{\tau_1}>k/2N\}\nonumber \\
&=&(N-1)\,m \circ (f^{\tau_1})^{-1}\{D_k\}.
\end{eqnarray}
where $D_k=\{R>k/2N\} = \cup_{j+1>k/2N}(I_j\cup I_j^\prime)$. Note that $m(D_k)\approx (k/2N)^{-1-1/\gamma}$
(Lemma~\ref{l.meas}).
Since $f^{\tau_1}=id|_{[x_0,x^\prime_0]}+\sum_{j=0}^\infty f^{j+1}|_{J_j\cup J_j^\prime}$ and each branch of
$f^{\tau_1}$ has uniformly bounded distortion (see proof of Lemma~\ref{l:distortion}),
there is a constant $c\geq 1$ such that
\begin{eqnarray}\label{e.estGk3}
m\circ(f^{\tau_1})^{-1}\{D_k\}
&\leq& m(D_k)+c\,\sum_{j=0}^\infty\frac{m(D_k)}{m(\Delta_0)}(m(J_j)+m(J_j^\prime))\nonumber\\
&\leq& c\,\frac{m(D_k)}{m(\Delta_0)}\leq c^{\prime} (k/2N)^{-1-1/\gamma}.
\end{eqnarray}
Combining (\ref{e.estGk1}), (\ref{e.estGk2}), (\ref{e.estGk3}) and the estimate
$\sum_{j+1> k/2}m(J_j\cup J^\prime_j)\approx (k/2)^{-1/\gamma}$ from Lemma~\ref{l.meas}
completes the proof.
\end{proof}

{\em Proof of Theorem~\ref{th.2ddecay}:\/} First,
$\overline{\psi}$ inherits the H\"older property from~$\psi$.
Put $n^\prime=\lfloor n/3\rfloor$,  $k=n-n^\prime$ and decompose
$$\varphi\circ B^k = \varphi_0+\varphi_1+\varphi_2$$
as in Lemma~\ref{lem:decompose}. Then,
$$\Cor_n(\varphi,\psi)= \Cor_{n^\prime}(\varphi\circ B^k,\psi)
\leq \Cor_{n^\prime}(\varphi_0,\psi) + \sum_{i=1}^2\Cor_{n^\prime}(\varphi_i,\psi).$$
The latter two terms are bounded above by $C\,n^{-1/\gamma}$ for some constant $C$
independent of $n$ and the first term is
$O((n^\prime)^{-1/\gamma})=O(n^{-1/\gamma})$ by~(\ref{eqn:corindpast})
and Theorem~\ref{endomorphism}~part~3.
\qquad$\Box$

\section*{Appendix 1: precise distortion and decay estimates}

Assume that $\alpha,~\alpha^\prime,~c_0,~c_1,~g_0$ and $g_1$ are given,
defining $\phi$ as in Section \ref{sec:family}, the generalized baker's transformation
$B$ and two branched expanding map $f$.  As noted in Equation \ref{eqn:DB} we
compute
$${f}'(x) = \left\{\begin{array}{ll}
	\textstyle\frac{1}{\phi(f(x))}& x<a,\\
	\textstyle\frac{1}{1-\phi(f(x))}& x>a.
\end{array}\right.$$
From the expression for $\phi$, estimates on $g_0$ and
the expression
$$
x - {f}^{-1}(x) = \int_0^x(1-\phi(t))\,dt,
$$
valid under the left branch of $f$,
we obtain constants $C_0$, $\delta_0>0$ such that for
all  $0 \leq x \leq \delta_0$ we have
\begin{equation}
\label{e.deltax_at_zero}
C_0^{-1} x^{1+\alpha} \leq x - {f}^{-1}(x) \leq C_0 x^{1+\alpha}.
\end{equation}
A similar estimate holds for $x$ near 1 using the right branch of
$f$:  There exists a constant $C_1$ and $\delta_1 > 0$
such that for all $1-\delta_1 \leq x \leq 1$
\begin{equation}
\label{e.deltax_at_one}
C_1^{-1} (1-x)^{1+\alpha^\prime} \leq {f}^{-1}(x) - x \leq C_1 (1-x)^{1+\alpha^\prime}
\end{equation}
Continue with the notation $x_0$ the left most period--$2$ point,
$x_k={f}^{-1}(x_{k-1})\cap [0,x_k)$ and
similarly for $x^\prime_k$.

\subsection*{Proof of Lemma \ref{l.meas} on asymptotics of the $x_n, ~x_n^\prime$}
\label{a:1}

(i) We first establish the estimates on $x_n$.  First, for any $y\geq \delta^{-1},\,z \geq 0$,
the mean value theorem and~(\ref{e.deltax_at_zero})  give
\begin{eqnarray}
\frac{[\frac{1}{y}]^{1/\alpha}-[\frac{1}{y+z}]^{1/\alpha}}
{[\frac{1}{y}]^{1/\alpha}-{f}^{-1}([\frac{1}{y}]^{1/\alpha})}
&\leq&\textstyle\frac{C_0}{\alpha}\,
\left[\frac{1}{y+\theta\,z}\right]^{1/\alpha-1}
\left(\frac{1}{y}-\frac{1}{y+z}\right)\,y^{1+1/\alpha} \nonumber\\
&=&\textstyle\frac{C_0}{\alpha}\,
\left[\frac{y}{y+\theta\,z}\right]^{1/\alpha}
\left[\frac{y+\theta\,z}{y+z}\right]\,z
\label{e.cflacunary}\end{eqnarray}
(where $\theta\in[0,1]$).
The upper and lower bounds are obtained by distinct
applications of~(\ref{e.cflacunary}).
First, fix $n$ such that $x_n^{-\alpha} < \delta_0$ and
set $y = x_n^{-\alpha}$ and $z= \left[\frac{C_0}{\alpha}\right]^{-1}$.

 Then the RHS  of Equation \ref{e.cflacunary} is bounded above
by~$1$, so that
$$
\textstyle[\frac{1}{y}]^{1/\alpha}-[\frac{1}{y+z}]^{1/\alpha}
\leq [\frac{1}{y}]^{1/\alpha}-{f}^{-1}([\frac{1}{y}]^{1/\alpha}).
$$
In particular,
${f}^{-1}([\frac{1}{y}]^{1/\alpha})
\leq[\frac{1}{y+z}]^{1/\alpha}$, so that
by using $y={x_n}^{-\alpha}$ and induction,
for all $k\geq 0$,
$$\textstyle x_{n+k}={f}^{-k}(x_n)=
{f}^{-k}([\frac{1}{y}]^{1/\alpha})
\leq\left[\frac{1}{y+k\,z}\right]^{1/\alpha}
\leq \frac{1}{z^{1/\alpha}}\,
\left[\frac{1}{k}\right]^{1/\alpha} \approx \left[ \frac{1}{n+k}\right]^{1/\alpha}.$$
On the other hand, whenever $y\geq z$ then the RHS of~(\ref{e.cflacunary}) is
bounded below by
$\frac{1}{\alpha\,C_0}\frac{1}{2^{1+1/\alpha}}\,z$. Pick
$z=C_0 \alpha 2^{1+1/\alpha} $ and set
$y=\max\{z,{x_n}^{-\alpha}\}$.   Then
$$\textstyle x_{n+k}={f}^{-k}(x_n)\geq
{f}^{-k}([\frac{1}{y}]^{1/\alpha})
\geq\left[\frac{1}{y+k\,z}\right]^{1/\alpha}
\geq \frac{1}{(2\,y)^{1/\alpha}}\,
\left[\frac{1}{k}\right]^{1/\alpha} \approx \left[\frac{1}{n+k}\right]^{1/\alpha}.$$
This establishes the asymptotics for the $x_k$.  The
estimates on $x_k^\prime$ are similar, using $\alpha^\prime$ instead of $\alpha$
and Equation (\ref{e.deltax_at_one}) instead of Equation (\ref{e.deltax_at_zero}).
\newline
(ii) Since $J_k=[x_{k+1},x_k)$, we have
$m(J_k)=x_k-x_{k+1}\approx {x_k}^{1+\alpha}\approx
\left[\frac{1}{k}\right]^{1+1/\alpha}$ by~(\ref{e.deltax_at_zero}) and part~(i) of
the lemma. The estimate on $J_k^\prime$ using $x_k^\prime$ is similar. \newline
(iii) Observe that on $(a,x^\prime_0)$, $f^\prime > 1$ is decreasing
so for $x \in I_k := [t_{k+1}, t_k]$ we have
$f^\prime (t_{k+1}) \geq f^\prime(x) \geq f^\prime(t_{k})$.
But, by part (i), for all sufficiently large $k$,
$$
f^\prime(t_k) = (1 - \phi(x_{k-1}))^{-1}
\approx \left( (k-1)^{\frac{1}{\alpha}}\right)^\alpha \approx k
$$
The argument for intervals $I_k^\prime$ in $[x_0, a)$ is similar.
\newline
(iv) Since $f:I_k\rightarrow J_{k-1}$ bijectively, there is an $x\in I_k$
such that
$$m(I_k) = \frac{m(J_{k-1})}{{f_\alpha^\prime}(x)}
\approx \left[\textstyle\frac{1}{k-1}\right]^{1+1/\alpha}\,
\textstyle\frac{1}{k} \approx
\left[ \textstyle\frac{1}{k}\right]^{2 + \textstyle\frac{1}{\alpha}}$$
using (ii) and (iii). The argument for the $I_k^\prime$ is similar. \newline

(v) When $n\leq\rho\,k$, $[\frac{1}{k+n}]\approx [\frac{1}{k}]$ so the
estimate follows from parts~(i) and~(ii) and the fact that
$x_k-x_{k+n}=\sum_{k\leq i<k+n}m(J_i)$.\hfill$\Box$

\subsection*{Proof of Lemma \ref{l:distortion} on uniform distortion}

We assume that $y,z\in I_i\subset\Delta_{0,i} \subseteq (a, x_0^\prime)$.  The
case where $y,z\in{I_i}'$ is similar.   For each $1\leq k<i+1=R$ let
$y_k=f^{R-k}(y)$ and $z_k=f^{R-k}(z)$.
Thus $y_k,z_k\in J_{k-1}$. Now,
$$[\log({f}')]'|_{J_k} =
\left.\textstyle\frac{{f}''}{{f}'}\right|_{J_k} =
\left[\textstyle\frac{-{\phi}'}{{\phi}^2}\right]
\circ f|_{J_k}
\approx \left(\left[\textstyle\frac{1}{k+1}\right]^{1/\alpha}\right)^{\alpha-1}$$
The final estimate in this expression follows from two observations. First
note that $\phi|_{f(J_j)} \geq \phi(x^\prime_0) >0$,
providing a uniform lower bound on the denominator for all  $j= 0,1, \dots$
and second,
$-{\phi}'\circ f(x)=\alpha c_0 [f(x)]^{\alpha-1} + g^\prime_0(f(x))
\approx {[f(x)]}^{\alpha-1} \approx {x}^{\alpha-1}$ whenever $x\in [0, x_0]$
since $x \leq f(x) \leq 2x$. Thus,
\begin{eqnarray}\label{e.bd1}
\left|\log\textstyle\frac{{f}'(y_k)}{{f}'(z_k)}\right|
\leq c\,\left[\textstyle\frac{1}{k}\right]^{1-1/\alpha}\,|y_k-z_k|
&=& c\,\left[\textstyle\frac{1}{k}\right]^{1-1/\alpha}\,m(J_{k-1})\,
\textstyle\frac{|y_k-z_k|}{m(J_{k-1})}\nonumber \\
&\leq& c'\,\left[\textstyle\frac{1}{k}\right]^2\,
\textstyle\frac{|y_k-z_k|}{m(J_{k-1})}
\leq c'\,\left[\textstyle\frac{1}{k}\right]^2
\end{eqnarray}
since $|y_k-z_k|\leq m(J_{k-1})\approx m(J_k)$, where the latter estimate
uses Lemma~\ref{l.meas}~(ii).

A slightly different computation is required
for the first iterate.
$$[\log({f}')]'|_{I_i} =
\left.\textstyle\frac{{f}''}{{f}'}\right|_{I_i} =
\left[\textstyle\frac{{\phi}'}{{[1-\phi]}^2}\right] \circ f|_{I_i}$$
Therefore, for some $t$ in $I_i$ between $y$ and $z$ we have
\begin{equation}
\left|\log\textstyle\frac{{f}'(y)}{{f}'(z)}\right|
= \textstyle\frac{|{\phi}'(f(t))|}{{[1-\phi(f(t))]}^2}| y-z| \approx
\frac{m(I_i)}{m(J_{i-1})} \frac{|y-z|}{m(I_i)}
\end{equation}
Here we have used $1-\phi \approx x^\alpha$, for $x \approx 0$
$|\phi'(x)|\approx x^{\alpha -1}$, $f(t) \in J_{i-1}$, (hence
$f(t) \approx \left( \frac{1}{i-1} \right)^{\frac{1}{\alpha}}$) and estimate
(ii) from Lemma~\ref{l.meas}.
Next, observe that for some $t_0 \in J_{i-1}$
\begin{equation}
\textstyle \frac{m(I_i)}{m(J_{i-1})}  =
\textstyle \frac{ 1}{m(J_{i-1})} \int_{J_{i-1}} 1 - \phi = 1- \phi(t_0)
\approx \textstyle \frac{1}{i-1} \approx \textstyle \frac{1}{i}
\end{equation}
since then $t_0 \approx \left(\textstyle \frac{1}{i-1}\right)^{\frac{1}{\alpha}}$.  Therefore
\begin{equation}\label{e.bd2}
\left|\log\textstyle\frac{{f}'(y)}{{f}'(z)}\right|
\leq \textstyle \frac{c''}{i} \textstyle\frac{|y-z|}{m(I_i)} \leq \textstyle \frac{c''}{i}
\end{equation}
for some $c''$ independent of $y,z,i$ (but possibly depending on $\alpha$).

Now, since $({f}^R)'(y) = {f}'(y)\,{f}'(y_{R-1})\,
\cdots\,{f}'(y_1)$ (and similarly for $z$),
\begin{equation}
\label{e.bd3}
\left|\log\textstyle\frac{({f}^R)'(y)}{({f}^R)'(z)}\right|
= \left|\log\textstyle\frac{{f}'(y)}{{f}'(z)}\right|+
\sum_{k=1}^i\left|
\log\textstyle\frac{{f}'(y_k)}{{f}'(z_k)}\right|
< \frac{c''}{i} + c'\,\textstyle\sum_{k=1}^\infty\frac{1}{k^2}
\leq c'' + c'\,\textstyle\sum_{k=1}^\infty\frac{1}{k^2}
\stackrel{\mbox{\tiny def}}{=} C.\end{equation}
Now put $D=e^C$. Since the inequality in (\ref{e.bd3}) holds uniformly for any choice of
$y,z\in I_i$ and the map $f^R:I_i\rightarrow\Delta_0$ is
bijective, we have
$$\textstyle\frac{|y-z|}{m(I_i)}
\leq D\,\textstyle\frac{|f^R(y)-f^R(z)|}{m(\Delta_0)}.$$
Similarly, $\frac{(f^k)'(y_k)}{(f^k)'(z_k)}\leq D$
and since $f^k(y_k)= f^R(y)$ and $f^k(z_k)= f^R(z)$,
$$\textstyle\frac{|y_k-z_k|}{m(J_{k-1})}
\leq D\,\textstyle\frac{|f^R(y)-f^R(z)|}{m(f^k(J_{k-1}))}
= D\,\textstyle\frac{|f^R(y)-f^R(z)|}{m(\Delta_0)}.$$
The last two
displayed expressions can now be used to refine (\ref{e.bd2}) and
(\ref{e.bd1}), yielding
$$
\left|\log\textstyle\frac{{f}'(y)}{{f}'(z)}\right|\leq
\textstyle\frac{c''}{i}\,D\,\textstyle\frac{|f^R(y)-f^R(z)|}{m(\Delta_0)}
\mbox{~and~}
\left|\log\textstyle\frac{{f}'(y_k)}{{f}'(z_k)}\right|\leq
c'\,[\textstyle\frac{1}{k}]^2\,D\,\textstyle\frac{|f^R(y)-f^R(z)|}
{m(\Delta_0)}
$$
from which:
$$\left|\log\textstyle\frac{({f}^R)'(y)}{({f}^R)'(z)}\right|
\leq C\,D\,\textstyle\frac{|f^R(y)-f^R(z)|}{m(\Delta_0)}.$$
Finally, if $|\log x|<C$ then $|\log x|>\frac{C}{e^C-1}\,|x-1|$
by an elementary convexity estimate. In view
of~(\ref{e.bd3}),
$$\left|\textstyle\frac{(f^R)'(y)}{(f^R)'(z)}-1\right| \leq \textstyle\frac{D-1}{C}\,\left|\log\textstyle\frac{(f^R)'(y)}{(f^R)'(z)}\right|
\leq \textstyle\frac{D\,(D-1)}{m(\Delta_0)}|f^R(y)-f^R(z)|. \qquad\qquad \Box$$

\section*{Appendix 2:  Lower bounds for H\"older observables}

A function $\psi:[0,1]\rightarrow\R$ will be called {\em anti-symmetric\/} if
$\psi(1-x)=-\psi(x)$ for each $x\in[0,1]$.

\begin{lemma}\label{lem:slowmix}
Let $\phi$ be a cut function satisfying symmetry condition
(\ref{eqn:sym_cut}) and let $f$
denote the expanding 1-D  expanding map determined by  $\phi$ via
(\ref{eqn:map1}). Suppose that $\psi$ is decreasing and anti-symmetric. Then
$\frac{d}{dm}{f_*}^n(\psi\,m)$ is decreasing and anti-symmetric for each $n>0$.
\end{lemma}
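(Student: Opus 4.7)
The natural tool is the Perron--Frobenius (transfer) operator $P$ associated to $f$ and Lebesgue measure $m$: for any signed measure $\psi\,m$ the push-forward $f_*(\psi\,m)$ has density $P\psi$ with respect to $m$, so $\frac{d}{dm}f_*^n(\psi\,m)=P^n\psi$. By induction it will suffice to prove the single-step statement: if $\psi$ is decreasing and anti-symmetric, so is $P\psi$. Since symmetry of $\phi$ forces $a=\int\phi=1/2$ and the two branches of $f$ are surjective and increasing, the transfer operator takes the explicit form
\[
(P\psi)(y) \;=\; \phi(y)\,\psi(f_L^{-1}(y))\;+\;(1-\phi(y))\,\psi(f_R^{-1}(y)),
\]
where I have used that $f'(x)=1/\phi(f(x))$ on the left branch and $f'(x)=1/(1-\phi(f(x)))$ on the right (so the reciprocal Jacobians become $\phi(y)$ and $1-\phi(y)$, respectively).

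For anti-symmetry: a short change of variables in the integral equations~(\ref{eqn:map1}), using $\phi(1-t)=1-\phi(t)$, gives $f(1-x)=1-f(x)$, hence $f_L^{-1}(1-y)=1-f_R^{-1}(y)$ and $f'(1-x)=f'(x)$. Plugging these into the explicit formula and using both $\phi(1-y)=1-\phi(y)$ and $\psi(1-u)=-\psi(u)$ yields $(P\psi)(1-y)=-(P\psi)(y)$.

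For monotonicity the key observation is that a decreasing anti-symmetric function automatically satisfies $\psi(1/2)=0$, is \emph{non-negative} on $[0,1/2]$ and \emph{non-positive} on $[1/2,1]$. Since $f_L^{-1}:[0,1]\to[0,1/2]$ and $f_R^{-1}:[0,1]\to[1/2,1]$ are both increasing, $\psi\circ f_L^{-1}$ is a non-negative decreasing function of $y$ and $\psi\circ f_R^{-1}$ is a non-positive decreasing function of $y$. Combined with the hypothesis that $\phi$ itself is decreasing, the first summand of $P\psi$ is the product of two non-negative decreasing functions (hence decreasing), while in the second summand $1-\phi(y)$ is non-negative and increasing and $\psi\circ f_R^{-1}$ is non-positive and decreasing, so negating the whole term gives a product of two non-negative increasing functions (hence increasing), showing that the term itself is decreasing. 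Summing, $P\psi$ is decreasing.

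The step I expect to require the most care is the monotonicity argument: one has to track the sign of each factor and verify that the ``alignment'' between $\phi$ being decreasing and $\psi$ changing sign precisely at $1/2$ is what makes the two summands monotone in the \emph{same} direction; a less symmetric choice of $\phi$ or $\psi$ would break this cancellation. Everything else is algebraic manipulation enabled by the $x\mapsto 1-x$ symmetry, and the induction on $n$ is immediate once the base case is established.
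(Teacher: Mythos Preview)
Your proof is correct and follows essentially the same approach as the paper: reduce to a single step via the transfer operator, use the $x\mapsto 1-x$ symmetry of $f$ for anti-symmetry, and split $P\psi$ into the contributions from the left and right branches to get monotonicity from the sign structure of $\psi$ and the monotonicity of $\phi$. The only cosmetic differences are that the paper writes the two branch contributions as $L(\psi\mathbf{1}_{(0,1/2)})$ and $L(\psi\mathbf{1}_{(1/2,1)})$ rather than via your explicit formula, and handles anti-symmetry by commuting $L$ with the reflection operator instead of by direct substitution.
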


\begin{proof}
First, let $L$ be the Frobenius--Perron (transfer) operator for $f$,
so $\frac{d}{dm}{f_*}^n(\psi\,m)=L^n\psi$. By induction, it
suffices to show that $L\psi$ has the required properties.
Next, since the cut-function $\phi$ satisfies Equation (\ref{eqn:sym_cut}) for each
$t\in[0,1]$, the transformation $f$ satisfies $f(1-x) = 1-f(x)$ for each $x\neq 1/2$.
Let $L_-$ be the Frobenius-Perron operator for $x\mapsto (1-x)$, so
$LL_-=L_-L$ and $L_-\psi=-\psi$. Then
$$L\psi(1-x) = L_-L\psi(x) = LL_-\psi(x)=L(-\psi)(x)=-L\psi(x).$$
Next, since $\psi(1/2)=-\psi(1/2)$, $\psi(1/2)=0$ and therefore
$\psi\mathbf{1}_{(0,1/2)}\geq 0\geq \psi\mathbf{1}_{(1/2,1)}$
(and also $L\psi(1/2)=0$).
Since $\phi$ is a decreasing function, $1/f^\prime=\phi\circ f$ is decreasing
on $(0,1/2)$, so $\psi_1:=L(\psi\,\mathbf{1}_{(0,1/2)})$ is decreasing. A similar
argument shows that $\psi_2:=L(\psi\,\mathbf{1}_{(1/2,1)})$ is decreasing,
so $L\psi=\psi_1+\psi_2$ is decreasing.
\end{proof}

{\em Proof of Theorem~\ref{t.lower_function}:}
Let
$\varphi(x)=\psi(x)=x$ and put $\lambda=m + (\psi-1/2)\,m$. Then $\lambda$
is a probability measure and since $\int\varphi\,dm=1/2$,
$$\int(\varphi-1/2)d(f_*^n\lambda) = \int (\varphi-1/2)\circ f^n\,d\lambda
=\int \varphi\circ f^n\,\psi\,dm - \int\varphi\,dm\int\psi\,dm.$$
Now, $f_*^n\lambda = m-(L^n(1/2-\psi))\,m$ where
$L$ is the Frobenius--Perron operator for $f$, so the previous equation can be rewritten as
\begin{equation}\label{eqn:1dDoC1}
\int(1/2-\varphi)\,L^n(1/2-\psi)\,dm = \int \varphi\circ f^n\,\psi\,dm - \int\varphi\,dm\int\psi\,dm.
\end{equation}
By Lemma~\ref{lem:slowmix}, $L^n(1/2-\psi)$ is decreasing and antisymmetric
(and in particular is non-negative on $(0,1/2)$, non-positive on
$(1/2,1)$). Hence, $(1/2-\varphi)\,L^n(1/2-\psi)\geq 0$ and so
\begin{equation}
\label{eqn:1dDoC2}
\begin{array}{rl}
\int_0^1 (1/2-\varphi)\,L^n(1/2-\psi)\,dm &\geq \int_0^{1/4} (1/2-\varphi)\,L^n(1/2-\psi)\,dm\\
&\geq \frac{1}{4}\int_0^{1/4} L^n(1/2-\psi)\,dm\\
&\geq \frac{1}{4}\,\frac{1}{2}\int_0^{1/2} L^n(1/2-\psi)\,dm\\
&= \frac{1}{4}\,\frac{1}{4}\int_0^1 |L^n(1/2-\psi)|\,dm=\frac{1}{16}|f_*^n\lambda-m|
\end{array}
\end{equation}
(the last equality follows by the definition of $\lambda$).
Clearly, $\lambda$ is separated from $m$ at $0$, so the theorem follows from
equations (\ref{eqn:1dDoC1}), (\ref{eqn:1dDoC2}) and Theorem~\ref{t.lower_measure}.\qquad $\Box$


\begin{thebibliography}{10}



\bibitem{A-Y} Alexander, J.~and Yorke, J., \textit{Fat baker's transformations.}\/
Ergodic Theory Dynam.~Systems,~ {\bf 4}(1984), 1--23.

\bibitem{A1}
J~Alves., \textit{
SRB measures for non-hyperbolic systems with multidimensional expansion.}\/
 Ann.~Sci.~\'Ec.~Norm.~Sup\'er.~(4), 33(1)(2000), 1--32.


\bibitem{A} Alves, J.,  \textit{A survey of recent results on some statistical features of
	non-uniformly expanding maps.}\/
Discrete~Contin.~Dyn.~Syst., {\bf 15} \#1(2006), 1--20.


\bibitem{AA} Alves, J.~and Ara\'ujo, V.,  \textit{Hyperbolic times: frequency versus integrability.}\/
  Ergodic Theory Dynam.~Systems~ {\bf 24}(2004), 329--346.


\bibitem{ABV} Alves, J.,~ Bonatti, C.~and Viana, M., \textit{SRB measures for partially hyperbolic
systems with mostly expanding central direction.}\/
Invent.~Math. {\bf 140}(2000), 351--398.

\bibitem{ALP} Alves, J.,~ Luzzatto, S.~and Pinheiro, V., \textit{Markov structures and decay of
correlations for non-uniformly expanding dynamical systems.}\/
Ann.~Inst.~H.~Poincar\'e Anal.~Non Lin\'eaire {\bf 22} \#6 (2005), 817--839.

\bibitem{ALP2} Alves, J.,~ Luzzatto, S.~and Pinheiro, V., \textit{Lyapunov exponents and rates of mixing for one-dimensional maps.}\/
Ergodic Theory Dynam.~Systems~ {\bf 24}(2004), 637--657.

\bibitem{B-M} Bose, C., and Murray, R., \textit{Integrability of first hyperbolic times for intermittent maps with unbounded derivative.}\/  Preprint: 2011.


\bibitem{B-Y} Benedicks, M.~and Young, L-S., \textit{Markov extensions and decay of correlations for certain H\'enon maps.}\/ G\'eom\'etrie complexe et syst\`emes dynamiques (Orsay, 1995). Ast\'erisque  \#26, xi(2000), 13--56.

\bibitem{B1} Bose, C., \textit{Generalized baker's transformations.}\/  Ergodic Theory Dynam.~Systems~
{\bf 9}(1989), 1--17.


\bibitem{C-Y}Chernov, N.I.~and Young, L-S., \textit{
Decay of correlations for Lorentz gases and hard balls.}\/ (English summary) Hard ball systems and the Lorentz gas.
Encyclopaedia Math.~Sci.~  {\bf 101} Springer Verlag,  2000, pp 89--120.

\bibitem{CZ} Chernov, N.~and Zhang, H-K., \textit{ Improved estimates for correlations in billiards.}\/   Comm.~Math.~Phys.~{\bf 277\/} \#2(2008), 305--321.



\bibitem{C-H-V} Cristadoro, G.,~ Haydn, N.~and Vaienti, S., \textit{Statistical properties of intermittent maps
 with unbounded derivative.}\/  Nonlinearity {\bf 23} \#5(2010), 1071--1095.



\bibitem{G1} Gou\"ezel, S., \textit{Sharp polynomial estimates for the decay of correlations.}\/
 Israel~J.~Math.~ {\bf 139}(2004), 29--65.



\bibitem{K} Kwon, DY., \textit{The natural extensions of $\beta-$tranformations which
generalize baker's transformations.}\/ Nonlinearity {\bf 22} \#2 (2009), 301--310.



\bibitem{KS} Katok, A.~and Strelcyn, J.M., \textit{ Invariant Manifolds, Entropy and
Billiards; Smooth Maps with Singularities.}\/  Lecture Notes in Mathematics.~ {\bf1222}
Springer Verlag, 1986.

\bibitem{LSV} Liverani, C.,~ Saussol, B.~and Vaienti, S., \textit{A probabilistic approach to intermittency.}\/  Ergodic Theory Dynam.~Systems~  {\bf 19}(1999),  671--685.


\bibitem{Ma} Markarian, R., \textit{ Billiards with polynomial decay of correlations.}\/  Ergodic Theory Dynam.~Systems~ {\bf 24}(2004), 177--197.

\bibitem{P} Pianigiani, G., \textit{ First Return Map and Invariant Measures.}\/  Israel~J.~Math.~
{\bf 35}(1980), 32--48.

\bibitem{R} Rahe, M., \textit{ On a class of generalized baker's transformations.}\/
Canad.~J.~Math.~ {\bf 45}(1993), 638--649.

\bibitem{Rms} Rams, M., \textit{Absolute continuity of the  SRB measure for non-linear fat baker maps.}\/
 Nonlinearity {\bf 16}  \#5(2003),1649--1655.


\bibitem{S}  Sarig, O., \textit{ Subexponential decay of correlations.}\/
 Invent.~Math.~ {\bf 150}(2002), 629--653.

\bibitem{T} Tsujii, M., \textit{Fat solenoidal attractors.}\/ Nonlinearity  {\bf 14}(2001), 1011--1027.

\bibitem{V} Viana,  M., \textit{ Multidimensional non-hyperbolic attractors.}\/
Publ.~Math.~Inst.~Hautes \'Etudes Sci.~{\bf 85}(1997),
63--96.

\bibitem{Y1} Young, L-S., \textit{ Statistical Properties of Dynamical Systems with
Some Hyperbolicity.}\/  Ann.~of Math.~ {\bf 147} \#3(1998), 585-650.

\bibitem{Y2} Young, L-S., \textit{  Recurrence times and rates of mixing.}\/ Israel~ J.~Math.~ {\bf 110}(1999),
153--188.

\bibitem{Z} Zweim\"uller, Roland, \textit{Ergodic structure and invariant densities of non-Markovian interval maps with indifferent fixed points.} \/ Nonlinearity {\bf 11}(1998), no. 5, 1263--1276.



\end{thebibliography}
\end{document}